\theoremstyle{plain}
\newtheorem{theorem}{Theorem}
\newtheorem{lemma}{Lemma}
\newtheorem{corollary}{Corollary}
\theoremstyle{definition}
\newtheorem{definition}{Definition}
\newtheorem{example}{Example}
\newtheorem{problem}{Problem}
\theoremstyle{remark}
\newtheorem{remark}{Remark}
\def\prank{\mbox{\rm prank}}
\def\mr+{\mbox{$\text{mr}_{+}$}}
\DeclareMathOperator*{\argmin}{arg\,min}
\newcommand{\mR}{\mathbb{R}}
\newcommand{\script}[1]{\EuScript{#1}}
\begin{document}
%\IEEEoverridecommandlockouts
\begin{frontmatter}
\title{Low Phase-Rank Approximation
}

\author[HKUST]{Di Zhao\footnote{Corresponding author.}}\ead{dzhaoaa@connect.ust.hk}  
\author[HKUST]{Axel Ringh}\ead{eeringh@ust.hk} 
%\author[HKU]{Sei Zhen Khong}\ead{szkhong@hku.hk},               % e-mail address
\author[HKUST]{Li Qiu}\ead{eeqiu@ust.hk}  % (ead) as shown
\author[HKU]{Sei Zhen Khong}\ead{szkhongwork@gmail.com}

\address[HKUST]{Department of Electronic and Computer Engineering, The Hong Kong University of Science and Technology, Clear Water Bay, Kowloon, Hong Kong, China}  % Please supply
%\address[SZ]{Independent researcher}             % full addresses
\address[HKU]{Independent researcher}  

\begin{keyword}                           % Five to ten keywords,
	Matrix phase, phase-rank, low phase-rank approximation, geometric mean, geodesic distance
	
	\vskip 3pt
	
	\MSC[2020] 15A45, 15A60, 15B48, 47A58, 53C22
\end{keyword}    

\begin{abstract}
In this paper, we propose and solve a low phase-rank approximation problem, which serves as a counterpart to the well-known low-rank approximation problem and the Schmidt-Mirsky theorem. More specifically, a nonzero complex number can be specified by its gain and phase, and while it is generally accepted that the gains of a matrix may be defined by its singular values, there is no widely accepted definition for its phases.
In this work, we consider sectorial matrices, whose numerical ranges do not contain the origin, and adopt the canonical angles of such matrices as their phases. 
Similarly to the rank of a matrix defined to be the number of its nonzero singular values, we define the phase-rank of a sectorial matrix as the number of its nonzero phases. While a low-rank approximation problem is associated with matrix arithmetic means, as a natural parallel we formulate a low phase-rank approximation problem using matrix geometric means to measure the approximation error. A characterization of the solutions to the proposed problem is then obtained, when both the objective matrix and the approximant are restricted to be positive-imaginary. Moreover, the obtained solution has the same flavor as the Schmidt-Mirsky theorem on low-rank approximation problems. In addition, we provide an alternative formulation of the low phase-rank approximation problem using geodesic distances between sectorial matrices. The two formulations give rise to the exact same set of solutions when the involved matrices are additionally assumed to be unitary.

%{[Say something about this solution, and its relation to the previous formulation?]} {\sout{which gives rise to exactly the same solutions as those of its original formulation.}}
\end{abstract}
\end{frontmatter}

% Added to get correct numbering of footnotes in the text, starting from 1.
\setcounter{footnote}{0}

%\IEEEpeerreviewmaketitle
%\IEEEproof

\section{Introduction}
%This is a draft paper on the problem of the phase-rank (prank) minimization or low-prank matrix approximation. The phase of a matrix was originally introduced by Johnson \cite{Johnson1974209,Johnson2001spectral}, and further developed in \cite{WANG2020PhaseMath}. In this paper, we propose several low-prank approximation problems and focus on developing the counterpart results to the Schmidt-Mirsky theorem \cite[Chap.~IV]{stewart1990matrix}.

Many well-known concepts, such as degree, order, dimensionality and so on, can be interpreted as the rank of a certain matrix \cite{recht2010guaranteed}. A real-world system is often complicated, resulting in a mathematical model with high complexity, namely, a high-dimensional matrix with a large rank. Consequently, low-rank approximations to such models are of critical importance in understanding and analysing the system and have been extensively investigated over decades \cite{recht2010guaranteed,stewart1990matrix,ivan2008structured,ivan2014lowrank}. Among those celebrated results on low-rank approximation problems, the Schmidt-Mirsky theorem \cite[Chap.~IV]{stewart1990matrix}, \cite[Chap.~2]{ivan2014lowrank}, which gives rise to an analytic solution to the unstructured low-rank approximation, has been one of the most fundamental. In this study, we propose a parallel to the concept of rank --- the \emph{p}hase-\emph{rank} (prank). In particular, we formulate a low-prank approximation problem and obtain the counterpart result to the Schmidt-Mirsky theorem.

The rank of a matrix is defined as the number of its nonzero singular values.  While singular values are well accepted as the gains of a matrix in different directions along the corresponding singular vectors, the (canonical) phases of a matrix, though not widely utilized, were originally introduced in \cite{Johnson1974209,Johnson2001spectral} and further developed in \cite{WANG2020PhaseMath}. Such phases are defined for a special family of square matrices, called sectorial matrices, whose numerical ranges do not contain the origin. The phases share many parallel properties compared with the singular values in various aspects as detailed in \cite{Johnson2001spectral,WANG2020PhaseMath}. 

Based on the definition of canonical phases, we propose the concept of phase-rank, defined as the number of the nonzero phases of a sectorial matrix. As phases are the counterpart to singular values, the prank is a natural counterpart to rank. Motivated by the low-rank approximation problem, we formulate in parallel a low-prank approximation problem, aiming at solving a matrix approximation problem in phases subject to low-prank constraints. Moreover, we obtain the counterpart result to the Schmidt-Mirsky theorem that characterizes a set of analytic solutions to the low-prank optimization. The approximation error in low-rank approximation is usually measured in terms of the norm on subtraction between matrices, namely, $\|A-E\|$. As a parallel, we measure the error in low-prank approximation using phases of some division operations, namely, $\phi(E^{-1}AE^{-1})$. Such operations can also be interpreted via the notions of arithmetic and geometric means \cite{drury2015principal}, respectively, giving rise to another parallel between the gain and phase. Furthermore, we introduce an alternative formulation for the low-prank approximation, which is based on a family of geodesic distances between sectorial matrices. The two formulations give rise to the exact same set of solutions when the involved matrices are additionally assumed to be unitary.

A sectorial matrix with zero prank is positive definite, whereby the prank can be regarded as a quantized measure on how close a matrix is to positive definiteness. This serves as a counterpart to the fact that the rank measures how close a matrix is to zero. Consequently, a low-prank approximation can be interpreted as finding an approximant, which is as close to positive definiteness as possible, to the objective matrix. 
Moreover, as the canonical phases of matrices have been exploited in \cite{chen2019phase} as a useful notion in handling systems and control problems, the developed results on low-prank approximation may also be applicable to such problems. In terms of computation, both the low-rank and low-prank approximation problems are nonconvex. In general, a low-prank approximation problem is connected to but cannot be reformulated as a low-rank approximation, which is detailed in Section~\ref{sec:LPA_GeoMean}. 

The rest of the paper is organized as follows. In Section~\ref{sec:pre}, basic notation and preliminary results are introduced. In Section~\ref{sec:LPA_GeoMean}, a low-prank approximation problem is formulated based on the geometric mean of matrices and a theorem characterizing its solutions is obtained. In addition, an alternative problem formulation based on a family of geodesic distances is proposed in Section~\ref{sec:geodesic}, to which the set of optimal solutions is characterized accordingly. Finally, the paper is concluded in Section~\ref{sec:conc}.

\section{Preliminaries}\label{sec:pre}
\subsection{Basic Notation}
Let $\mathbb{F} = \mathbb{R}$ or $\mathbb{C}$ be the real or complex field,  and ${\mathbb F}^n$ be the linear space of $n$-tuples of $\mathbb{F}$ over the field $\mathbb{F}$. The Euclidean norm of a vector $x\in{\mathbb F}^n$ is denoted by $\|x\|_2$. A nonzero complex number has its polar form $c=|c|e^{j\angle c}$, where $|c|>0$ is called its length or magnitude and $\angle c\in(-\pi,\pi]$ is called its angle or phase. 

For a matrix $A\in{\mathbb C}^{n \times n}$, its range is denoted by $\mathcal{R}(A)$, its kernel by $\mathcal{K}(A)$, its complex conjugate by $A^*$, and its singular values by {$\sigma(A) = [\sigma_k(A)]_{k=1}^n$, where}  $\sigma_k(A)$, $k=1,2,\dots,n$ {are ordered} in a nonincreasing order. 
An identity matrix is denoted by $I_n\in\mathbb{C}^{n\times n}$. A matrix $U\in{\mathbb C}^{n\times n}$ is said to be unitary, denoted by $U\in\mathcal{U}_n$, if $U^*U=UU^*=I_n$.

\subsection{The Canonical Phases of a Matrix}
A matrix $A\in\mathbb{C}^{n\times n}$ is said to be \emph{sectorial} if its numerical range
$$\mathcal{W}(A):=\{x^*Ax \mid x\in\mathbb{C}^n,~\|x\|_2=1\}$$ does not contain $0$. {A} sectorial matrix $A$ is
congruent to a diagonal unitary matrix, {where the latter} is unique up to a
permutation \cite{horn1959eigenvalues,Johnson1974209,zhang2015matrix,WANG2020PhaseMath}, i.e., $A$ admits the following sectorial decomposition
\begin{align}\label{eq:sec_dec_matrix}
A=T^*DT,
\end{align}
where $T$ is nonsingular and $D$ is diagonal and unitary. As $D$ is unique up to a permutation, the (canonical) phases of $A$, denoted by
$$\bar{\phi}(A):=\phi_1(A)\geq \cdots \geq \phi_n(A)=:\underline{\phi}(A),$$
are defined as the phases of {the} eigenvalues (which are diagonal elements) of $D$, and by convention taking values so that $\bar{\phi}(A)-\underline{\phi}(A)<\pi$. Moreover, the phase center of $A$ is defined as
$$\gamma(A):=\frac{\bar{\phi}(A)+\underline{\phi}(A)}{2}\in(-\pi,\pi].$$
For notational convenience, denote by {$\phi(A)=[\phi_k(A)]_{k=1}^n$} and $\phi_{n+1}(A)=0$.

%For a sectorial matrix $A\in\mathcal{C}[\alpha,\beta]$, its \emph{canonical phases} \cite{Johnson1974209} can be defined as the half of the phases of the eigenvalues of $AA^{-*}$ based on an appropriate separation of the half plane, i.e.,
%$$\phi_i(A):=\frac{1}{2}[\angle\lambda_i(e^{-j(\pi+2\alpha)}AA^{-*})+\pi+2\alpha],~i=1,2,\dots,n,$$
%in a non-increasing order. In this case, it holds that
%$\bar{\phi}(A):=\phi_1(A)\leq \beta$, $\underline{\phi}(A):=\phi_n(A)\geq \alpha$, and 

Let $\alpha,\beta\in\mathbb{R}$ with $0<\beta-\alpha\leq\pi$ and $(\alpha+\beta)/{2}\in(-\pi,\pi]$. A matrix $A\in\mathbb{C}^{n\times n}$ is said to be sectorial in $[\alpha,\beta]$ (similarly defined for $[\alpha,\beta)$, $(\alpha,\beta]$ and $(\alpha,\beta)$), denoted by $A\in\mathcal{C}[\alpha,\beta]$,  if $A$ is sectorial and  $[\underline{\phi}(A),\bar{\phi}(A)]\subset[\alpha,\beta]$.

In particular, a sectorial matrix $A$ is called
positive-real (accretive) if $A\in\mathcal{C}[-\pi/2,\pi/2]$, positive-imaginary if $A\in\mathcal{C}[0,\pi)$, and negative-imaginary if $A\in\mathcal{C}(-\pi,0]$.

\subsection{Low-Rank Approximation}
A function $\Phi:~\mathbb{R}^n \to \mathbb{R}$ is said to be a symmetric gauge function \cite{mirsky1960symmetricgaugefunction}, \cite[Chap.~II]{stewart1990matrix} if it satisfies the following conditions: 1) $x\neq0$ $\Rightarrow$ $\Phi(x)>0$; 2) $\Phi(\rho x)=|\rho|\Phi(x)$; 3) $\Phi(x+y)\leq \Phi(x) + \Phi(y)$; 4) for any permutation matrix $P$ we have $\Phi(Px)=\Phi(x)$; 5) $\Phi(|x|)=\Phi(x)$.

A norm $\|\cdot\|$ on $\mathbb{C}^{n \times n}$ is said to be unitarily invariant \cite[Chap.~3]{stewart1990matrix} if for all {$A \in \mathbb{C}^{n\times n}$ and all} unitary matrices $U,V\in\mathbb{C}^{n\times n}$, it holds that
$$\|U^*AV\|=\|A\|.$$ 
The set of all unitarily invariant norms can be characterized by the symmetric gauge functions. More precisely, let $\Phi$ be a symmetric gauge function and define
$$\|A\|_\Phi:= \Phi(\sigma(A)) =\Phi(\sigma_1(A),\dots,\sigma_{n}(A)).$$ 
Then the norm $\|\cdot\|$ is unitarily invariant on $\mathbb{C}^{n\times n}$ if and only if there is a symmetric gauge function $\Phi$ on $\mathbb{R}^n$ such that $\|A\|=\|A\|_\Phi$ for all $A\in\mathbb{C}^{n\times n}$ \cite[Thm.~3.6]{stewart1990matrix}.

%Let $\Phi$ be a symmetric gauge function, and denote by
%$$\|A\|_\Phi:=\Phi(\sigma_1(A),\dots,\sigma_{n}(A)).$$ 
%The following lemma on characterizing the set of all unitarily invariant norms is obtained from \cite[Thm.~3.6]{stewart1990matrix}.}
%\begin{lemma}\label{lem:unitarilyInvNorms}
%	The norm $\|\cdot\|$ is unitarily invariant on $\mathbb{C}^{n\times n}$ if and only if there is a symmetric gauge function $\Phi$ on $\mathbb{R}^n$ such that $\|A\|=\|A\|_\Phi$ for all $A\in\mathbb{C}^{n\times n}$. 
%\end{lemma}

A singular value decomposition (SVD) of $A\in{\mathbb C}^{n \times n}$ is denoted by
$$A=USV^*=\sum_{i=1}^{n}\sigma_i(A)u_iv_i^*,$$
where $S=\text{diag}(\sigma_1(A),\sigma_2(A),\dots,\sigma_{n}(A))$,
and  $U=\begin{bmatrix}u_1,u_2,\dots,u_n\end{bmatrix}$, $V=\begin{bmatrix}v_1,v_2,\dots,v_n\end{bmatrix}$ are unitary. The set of all $r$-truncations of $A$ by SVD is defined as
\begin{align}\label{eq:truncation_rank}{\mathcal{S}_g(A,r)}:=\left\{\sum_{i=1}^{r}\sigma_i(A)u_iv_i^*\Bigg|~\sum_{i=1}^{n}\sigma_i(A)u_iv_i^*~\text{is an SVD of}~A\right\},\end{align}
{In particular, note that $\mathcal{S}_g(A,r)$ is a singleton if and only if $\sigma_r(A) > \sigma_{r+1}(A)$.}

Given $A\in\mathbb{C}^{n \times n}$, a low-rank matrix approximation problem \cite{eckart1936approximation} aims at finding an {approximation $E$ of the object $A$} such that {the rank of $E$ is bounded by some given $r\geq 0$ and that the distance between $E$ and $A$ is as small as possible.} A standard and well-studied form of such a problem is formulated as follows. 
\begin{problem}[Low-Rank Approximation]\label{def:lowRankApprox}
	Given $A\in\mathbb{C}^{n \times n}$ and $0\leq r \leq n$, determine
	\begin{equation}\label{eq:low_rank_approx_def}
	{\hat{E}}=\argmin_E\{\|A-E\| \mid \text{\rm rank}(E)\leq r\}.
	\end{equation}
\end{problem}

 A well established-result for the above problem, called the Schmidt-Mirsky theorem \cite[Chap.~IV]{stewart1990matrix}, can be stated in the following form. 

\begin{lemma}[Schmidt-Mirsky Theorem]\label{lem:Schmidt_Mirsky}
	Let $\|\cdot\|$ be a unitarily invariant norm on $\mathbb{C}^{n \times n}$ and $0\leq r\leq n$. Then for $A\in\mathbb{C}^{n \times n}$, we have
	$$\min_{E}\{\|A-E\| |~ \text{\rm rank}(E)\leq r\}=\|\text{\rm diag}(0,\dots,0,\sigma_{r+1}(A),\dots,\sigma_n(A))\|,$$
	where the optimum is attained on each $\hat{E}\in \mathcal{S}_g(A,r)$. 
\end{lemma}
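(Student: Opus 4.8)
The plan is to prove the claimed equality by matching upper and lower bounds, which simultaneously shows that the optimum is attained on each $\hat{E}\in\mathcal{S}_g(A,r)$. For the upper bound I would fix an SVD $A=\sum_{i=1}^n\sigma_i(A)u_iv_i^*$ and take the truncation $\hat{E}=\sum_{i=1}^r\sigma_i(A)u_iv_i^*\in\mathcal{S}_g(A,r)$, which has rank at most $r$. The residual $A-\hat{E}=\sum_{i=r+1}^n\sigma_i(A)u_iv_i^*$ has nonzero singular values exactly $\sigma_{r+1}(A),\dots,\sigma_n(A)$, so $\sigma(A-\hat{E})$ equals $(\sigma_{r+1}(A),\dots,\sigma_n(A),0,\dots,0)$ up to reordering. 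Since $\|\cdot\|=\|\cdot\|_\Phi$ for some symmetric gauge function $\Phi$ by the characterization recalled above, and $\Phi$ is permutation-invariant, this gives $\|A-\hat{E}\|=\Phi(\sigma(A-\hat{E}))=\|\mathrm{diag}(0,\dots,0,\sigma_{r+1}(A),\dots,\sigma_n(A))\|$, the desired value.

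For the lower bound, let $E$ be arbitrary with $\mathrm{rank}(E)\leq r$. The key ingredient is the Weyl-type perturbation inequality $\sigma_{i+j-1}(X+Y)\leq\sigma_i(X)+\sigma_j(Y)$ for singular values. Applying it with $X=E$, $Y=A-E$, and $i=r+1$ gives, for each $k\geq 1$,
$$\sigma_{r+k}(A)\leq\sigma_{r+1}(E)+\sigma_k(A-E)=\sigma_k(A-E),$$
since $\sigma_{r+1}(E)=0$ because $\mathrm{rank}(E)\leq r$. Hence $\sigma(A-E)$ dominates $(\sigma_{r+1}(A),\dots,\sigma_n(A),0,\dots,0)$ componentwise.

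To conclude, I would invoke the monotonicity of symmetric gauge functions --- if $0\leq a_k\leq b_k$ for all $k$ then $\Phi(a)\leq\Phi(b)$, a consequence of the defining axioms --- to obtain $\|A-E\|=\Phi(\sigma(A-E))\geq\Phi(\sigma_{r+1}(A),\dots,\sigma_n(A),0,\dots,0)=\|\mathrm{diag}(0,\dots,0,\sigma_{r+1}(A),\dots,\sigma_n(A))\|$, matching the upper bound. Together the two bounds give the equality and show that every $\hat{E}\in\mathcal{S}_g(A,r)$ attains the minimum. I expect the main obstacle to lie in this lower bound, specifically in establishing the Weyl-type inequality and the monotonicity of $\Phi$; both are standard but carry the real content, the former being what forces the residual's singular values to dominate the tail singular values of $A$.
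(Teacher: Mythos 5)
Your proof is correct. Note that the paper does not prove this lemma at all --- it is quoted as a known result with a citation to \cite[Chap.~IV]{stewart1990matrix} --- and your argument (SVD truncation for the upper bound, Weyl's inequality $\sigma_{i+j-1}(X+Y)\leq\sigma_i(X)+\sigma_j(Y)$ with $\sigma_{r+1}(E)=0$ for the lower bound, plus monotonicity of symmetric gauge functions) is precisely the standard proof found in that reference, so there is nothing to reconcile.
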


\section{Low Phase-Rank Approximation via Geometric Means}\label{sec:LPA_GeoMean}
In this section, we start with the definition of phase-rank (prank), as a parallel concept to rank, and then formulate a low-prank approximation problem with the approximation error induced by the matricial geometric means. {Finally,  a characterization of solutions to such a problem is obtained, as a parallel to the famous Schmidt-Mirsky theorem in Lemma~\ref{lem:Schmidt_Mirsky}.}
\subsection{Definition of Phase-Rank}
It is well known that the rank of a square matrix can be defined as the number of nonzero singular values, counting multiplicities.
Motivated by this understanding and the parallel between gains and phases, we define the \emph{phase rank} of a sectorial matrix as follows.

\begin{definition}
For a sectorial matrix $A\in\mathbb{C}^{n\times n}$, we define its phase rank as
\[
\prank(A) := \text{the number of nonzero canonical phases of } A.
\]
\end{definition}
%the number of its nonzero canonical phases, denoted by $\prank(A)\in[0,n]$.
Note that $\prank(A)\in[0,n]$, and that if $0\notin \angle \mathcal{W}(A)$, i.e., when $\mathcal{W}(A) \cap \mathbb{R}_+ = \emptyset$, then $A$ is guaranteed to have full phase-rank.
%$\prank(A)={n}$.}
% A trivial and less interesting case is when $0\notin \angle \mathcal{W}(A)$, {i.e., when $\mathcal{W}(A) \cap \mathbb{R}_+ = \emptyset$,} which implies that $\prank(A)={n}$, {i.e., $A$ has a full phase-rank}. 
We summarize some of the comparisons on the properties between the rank and prank in Table~1.

\begin{table}[]%\label{tab:comparisons}
	\begin{tabular}{l|l|l}
		& Rank                                                                    & Prank                                                                      \\ \hline
		Applicable matrices & All complex-valued matrices & Sectorial matrices\\ \hline
		Definition & \begin{tabular}[c]{@{}l@{}}No.~of nonzero singular values\end{tabular} & \begin{tabular}[c]{@{}l@{}}No.~of nonzero  canonical phases\end{tabular}        \\ \hline
		Zero value   & Zero matrix                                                             & \begin{tabular}[c]{@{}l@{}}Positive definite matrix\end{tabular} \\ \hline
		Small value  & \begin{tabular}[c]{@{}l@{}}Singular matrices \&  \\have advantage in storage \end{tabular}    & \begin{tabular}[c]{@{}l@{}}Nonsingular matrices  \&  \\``close'' to positive-definiteness\end{tabular}
	\end{tabular}
	\caption{Comparisons on the properties between rank and prank. }
\end{table}

\subsection{Problem Formulation on Low-Prank Approximation}
In order to gain some insights {into how} {to formulate the low-prank approximation problem}, we first interpret the well-known low-rank approximation problem via the matricial arithmetic mean. To this end,
let $M,N\in\mathbb{C}^{n \times n}$, and {define} their arithmetic mean
%is defined
as%
\footnote{We use the same notation for the arithmetic mean as in the seminal paper by Kubo and Ando \cite{kubo1980means}.}
$${M \nabla N}:=\frac{M+N}{2}.$$
Using the arithmetic mean, the low-rank approximation problem in Problem~\ref{def:lowRankApprox} can be restated as follows: given a symmetric gauge function $\Phi:~\mathbb{R}^n \to \mathbb{R}$, a matrix $A\in\mathbb{C}^{n \times n}$, and $0\leq r \leq n$, determine
\begin{align}\label{eq:low_rank_restate}
\hat{X}=\argmin_{X}\{\Phi(\sigma(X)) \mid \text{rank}((-X){\nabla} A)\leq r\}.
\end{align}
To see this, simply make the substitution $E = A - X = 2(-X){\nabla} A$ in Problem~\ref{def:lowRankApprox}. Consequently, the optimal low-rank approximation is given by $A-\hat{X}$ where $\hat{X}$ solves \eqref{eq:low_rank_restate}.

Keeping the arithmetic mean for low-rank approximation in mind, next we formulate a low-prank approximation problem. A multiplicative operation of two matrices naturally results in an additive operation on their canonical phases; e.g., for $a,b\in\mathbb{C}$, $\angle ab=\angle a+\angle b ~\text{mod} ~2\pi$. Therefore, instead of considering the arithmetic mean of two matrices as in the low-rank approximation, in the low-prank approximation we turn to the geometric mean to measure the approximation error.

{More specifically, the geometric mean between two strictly positive-real matrices $M,N \in \mathcal{C}(-\pi/2,\pi/2)$, denoted by $M\#N$, was defined in \cite{drury2015principal} as
$$(M\#N)^{-1}={\frac{2}{\pi}}\int_0^\infty(tM+t^{-1}N)^{-1}\frac{dt}{t},$$
as a generalization of the geometric mean for positive definite matrices, see, e.g., \cite[Chp.~4 and 6]{bhatia2007positive}.
Moreover, in \cite{drury2015principal} it was shown that $M\#N \in \mathcal{C}(-\pi/2,\pi/2)$, that $G = M\#N$ is the unique solution to the equation $GM^{-1}G=N$, and that the geometric mean can also be expressed as
$$M\#N=M^{\frac{1}{2}}(M^{-\frac{1}{2}}NM^{-\frac{1}{2}})^{\frac{1}{2}}M^{\frac{1}{2}}=N^{\frac{1}{2}}(N^{-\frac{1}{2}}MN^{-\frac{1}{2}})^{\frac{1}{2}}N^{\frac{1}{2}},$$
where $(\cdot)^{\frac{1}{2}}$ denotes the principal branch of the matrix square root, see, e.g., \cite[Sec.~1.7]{higham2008functions}.
Next, for $M,N \in \mathcal{C}[\alpha-\pi/2,\alpha+\pi/2)$ it is easily verified that we can define the geometric mean analogously: either by defining it as $e^{-i\beta}((e^{i\beta}M) \# (e^{i \beta}N))$ for any $\beta$ such that $e^{i \beta}M, e^{i \beta}N \in \mathcal{C}(-\pi/2,\pi/2)$, or by using another branch of the matrix square root, cf.~\cite{higham2008functions}.}

On the other hand, note that every unitarily invariant norm on a matrix can be obtained by a symmetric gauge function applied to its singular values. As for the phase counterpart, a natural choice for the objective function in a low-prank approximation would be a symmetric gauge function applied to the canonical phases. 

Based on the above observations, as a counterpart to the low-rank approximation problem in \eqref{eq:low_rank_restate} we propose the following formulation for a low-prank approximation problem:
given a symmetric gauge function $\Phi:~\mathbb{R}^n \to \mathbb{R}$, a sectorial $A\in\mathbb{C}^{n\times n}$ and $0\leq r \leq n$, determine
\begin{align}\label{eq:low_prank0}
{\hspace{-5pt}\hat{X}=\argmin_X\left\{\Phi(\phi(X)) \mid X^{-1},A\in\mathcal{C}[\alpha-\frac{\pi}{2},\alpha+\frac{\pi}{2})~\text{for some}~\alpha,~\prank (X^{-1}\#A)\leq r \right\}.}
\end{align}
\begin{remark}
	The objective function used in the low-prank approximation problem \eqref{eq:low_prank0} is invariant under congruence, i.e., $\Phi(\phi(X))=\Phi(\phi(T^*XT))$ for every sectorial $X$ and  nonsingular $T$. This can be seen as the counterpart to $\Phi(\sigma(Y))$, used as the objective function in the low-rank approximation problem \eqref{eq:low_rank_approx_def}, defining a unitarily invariant norm on~$Y$.
\end{remark}
Finally, by substituting $E = X^{-1}\#A$ in \eqref{eq:low_prank0}, i.e., substituting $X = E^{-1}AE^{-1}$, and by relaxing some phase-range constraints\footnote{It is noteworthy that in \eqref{eq:low_prank0}, $X^{-1}$ is required to be sectorial in the same half-plane as $A$, while such a constraint is relaxed in Problem~\ref{def:low_prank_v1} to make the formulation as general as possible.} on $X$, we obtain our desired definition for low-prank approximation based on the geometric mean.

\begin{problem}[Low-Prank Approximation]\label{def:low_prank_v1}
	Given a symmetric gauge function $\Phi:~\mathbb{R}^n \to \mathbb{R}$, a sectorial $A\in\mathbb{C}^{n\times n}$, and $0\leq r \leq n$, determine
	\begin{align}\label{eq:low_prank_approx_def_matrix}
	\hat{E}=\argmin_{E}\{\Phi(\phi(E^{-1}AE^{-1}))  \mid E^{-1}AE^{-1}~\text{and}~E~\text{are sectorial},~\prank (E)\leq r\}.
	\end{align}
\end{problem}

\subsection{{Solution to the Low-Prank Approximation Problem for Positive-Imaginary Matrices}}
%{Solutions to the Low-Prank Approximation}
Similarly to \eqref{eq:truncation_rank}, define the set of all $r$-half-truncation of a sectorial $A\in\mathbb{C}^{n\times n}$ by the sectorial decomposition as
\begin{multline}\label{eq:truncation_prank}
\mathcal{S}_p(A,r):=\{T^*\text{diag}(e^{j\phi_{1}/2},\dots,e^{j\phi_{r}/2},1,\dots,1)T \mid A \text{ is sectorial with}\\ \text{sectorial decomposition } A=T^*\text{diag}(e^{j\phi_{1}},e^{j\phi_{2}},\dots,e^{j\phi_{n}})T \}.
\end{multline}
Analogously to the Schmidt-Mirsky theorem, we have the following result characterizing the optimal solutions to the low-prank approximation in Problem~\ref{def:low_prank_v1} in the case where both $A$ and $X=E^{-1}AE^{-1}$ are constrained to be positive-imaginary.
\begin{theorem}\label{thm:sol_PI}
Given a symmetric gauge function $\Phi:~\mathbb{R}^n \to \mathbb{R}$, $A\in\mathcal{C}[0,\pi)$  and $0\leq r \leq n$, we have
\begin{multline}\label{eq:sol_PI}
\min_E\{\Phi(\phi(E^{-1}AE^{-1})) \mid E^{-1}AE^{-1}\in\mathcal{C}[0,\pi),~E~\text{is sectorial},~\prank (E)\leq r\}\\
=\Phi(0,\dots,0,\phi_{r+1}(A),\dots,\phi_{n}(A)),
\end{multline}
where the optimum is attained on each $\hat{E}\in\mathcal{S}_p(A,r)$. 
\end{theorem}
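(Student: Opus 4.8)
The plan is to follow the architecture of the Schmidt--Mirsky proof (Lemma~\ref{lem:Schmidt_Mirsky}): an explicit achievability computation for the upper bound, together with a phase analogue of Weyl's inequality and the monotonicity of symmetric gauge functions for the matching lower bound. For achievability I would fix a sectorial decomposition $A=T^*\diag(e^{j\phi_1},\dots,e^{j\phi_n})T$ and take $\hat E=T^*\diag(e^{j\phi_1/2},\dots,e^{j\phi_r/2},1,\dots,1)T\in\mathcal{S}_p(A,r)$. A direct computation gives
\[
\hat E^{-1}A\hat E^{-1}=T^{-1}\diag(1,\dots,1,e^{j\phi_{r+1}},\dots,e^{j\phi_n})T^{-*},
\]
which is a sectorial decomposition (with $T^{-*}$ playing the congruence role, since $(T^{-*})^*=T^{-1}$). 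Hence $\hat E^{-1}A\hat E^{-1}\in\mathcal{C}[0,\pi)$, while $\hat E$ is sectorial with phases in $[0,\pi/2)$ and $\prank(\hat E)\le r$, and the objective equals $\Phi(0,\dots,0,\phi_{r+1}(A),\dots,\phi_n(A))$. Thus $\hat E$ is feasible and attains the claimed value.

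For the lower bound, write $X=E^{-1}AE^{-1}$. It suffices to prove the phase-Weyl inequality
\[
\phi_i(X)\ \ge\ \phi_{r+i}(A),\qquad i=1,\dots,n-r,
\]
for every feasible $E$; since $A,X\in\mathcal{C}[0,\pi)$ have nonnegative phases, monotonicity of the symmetric gauge function then gives $\Phi(\phi(X))\ge\Phi(0,\dots,0,\phi_{r+1}(A),\dots,\phi_n(A))$, matching the achievability value. The main tool I would set up is a Courant--Fischer characterization of phases: for sectorial $M$,
\[
\phi_k(M)=\max_{\dim S=k}\ \min_{0\ne x\in S}\angle(x^*Mx).
\]
This follows from the sectorial decomposition, since $x^*Mx=\sum_\ell|(Tx)_\ell|^2 e^{j\phi_\ell(M)}$ is a nonnegatively weighted average of the unit-modulus numbers $e^{j\phi_\ell(M)}$, whose argument lies in $[\underline\phi(M),\bar\phi(M)]$; testing the span of the $k$ largest-phase directions yields ``$\ge$'', and a dimension-count intersection argument yields ``$\le$''.

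The crux --- and the step I expect to be the main obstacle --- is that $E$ is not Hermitian, so $\angle(x^*Ax)\ne\angle((Ex)^*X(Ex))$ in general and one cannot transport the extremal subspaces of $A$ to those of $X$ directly. I would circumvent this through the factorization $X=G\tilde A$, where $G:=E^{-1}E^*$ and $\tilde A:=E^{-*}AE^{-1}$ is a \emph{congruence} of $A$ (so $\phi(\tilde A)=\phi(A)$ and $\tilde A\in\mathcal{C}[0,\pi)$); indeed $G\tilde A=E^{-1}(E^*E^{-*})AE^{-1}=X$. Using the sectorial decomposition of $E$, one checks $G$ is similar to a diagonal unitary whose non-unit entries correspond exactly to the nonzero phases of $E$, so $\mathrm{rank}(G-I)\le\prank(E)\le r$ and the left eigenspace $\mathcal{M}:=\ker(G^*-I)$ satisfies $\dim\mathcal{M}\ge n-r$. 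On $\mathcal{M}$ one has $w^*G=w^*$, whence $w^*Xw=w^*\tilde Aw$ for all $w\in\mathcal{M}$. Taking an $(r+i)$-dimensional subspace $S$ on which $\angle(w^*\tilde Aw)\ge\phi_{r+i}(A)$ (Courant--Fischer applied to $\tilde A$) and intersecting with $\mathcal{M}$ produces a subspace of dimension $\ge i$ on which $\angle(w^*Xw)=\angle(w^*\tilde Aw)\ge\phi_{r+i}(A)$; the max--min characterization of $\phi_i(X)$ then delivers the inequality. The delicate points to verify are that the phase Courant--Fischer identity holds under the prevailing branch conventions, and that all quantities $\angle(w^*Xw),\angle(w^*\tilde Aw)$ stay within $[0,\pi)$ so that the min--max manipulations and the final appeal to monotonicity of $\Phi$ are legitimate.
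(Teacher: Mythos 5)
Your proof is correct, but your lower bound follows a genuinely different route from the paper's. The achievability half is identical (same $\hat E \in \mathcal{S}_p(A,r)$, same computation). For the lower bound, the paper never establishes your entrywise ``phase--Weyl'' inequality $\phi_i(E^{-1}AE^{-1})\ge \phi_{r+i}(A)$; it proves only the weaker Ky--Fan--sum (weak majorization) bounds $\psi_m(E^{-1}AE^{-1})\ge \sum_{k=r+1}^{\min\{n,r+m\}}\phi_k(A)$, obtained from the variational formula for phase sums in \cite{WANG2020PhaseMath}, the interlacing property of compressions, an auxiliary sum-monotonicity lemma (Lemma~\ref{lem:sectorial_perturb}) combined with an $\epsilon_\Delta$ full-rank perturbation argument, and finally the Fan dominance theorem \cite[Thm.~3.7]{stewart1990matrix} to pass to arbitrary symmetric gauge functions. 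Your argument replaces all of this with two ingredients absent from the paper: a Courant--Fischer max--min characterization of individual phases (valid --- both directions follow from the sectorial decomposition plus the fact that a conic combination of unit-modulus numbers on an arc of length $<\pi$ has its angle inside that arc; and since $A$, $\tilde A$, and $X$ all lie in $\mathcal{C}[0,\pi)$, the branch issues you flag do not actually arise), and the factorization $X=G\tilde A$ with $G=E^{-1}E^*$ satisfying $\mathrm{rank}(G-I)\le \prank(E)$. One small imprecision: from $E=S^*LS$ one gets $G=S^{-1}L^{-2}S$, whose non-unit diagonal entries are those with $\phi_k(E)\notin\{0,\pm\pi\}$, so they correspond to \emph{at most}, not ``exactly,'' the nonzero phases of $E$; the inequality you need survives, since fewer non-unit entries only enlarges $\ker(G^*-I)$. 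Your route buys a strictly stronger intermediate result (entrywise domination, hence weak majorization a fortiori), avoids the paper's perturbation/limiting step entirely, and closes with monotonicity of symmetric gauge functions on the nonnegative orthant instead of Fan dominance; its cost is that the min--max phase lemma and the rank bound on $G-I$ must be stated and proved as standalone lemmas, whereas the paper leans only on compression interlacing and the $\psi_m$ formula already available in the phase literature.
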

\begin{proof}
See \ref{app:proof_thm_sol_PI}.
\end{proof}

Note that the above theorem only solves a special case of Problem~\ref{def:low_prank_v1}, where
%some of the involved
the matrices are constrained to be positive-imaginary. The general version of Problem~\ref{def:low_prank_v1} may not have the desired solutions as given in \eqref{eq:sol_PI}, which is demonstrated in the following example.
\begin{example}\label{ex:counter_PI}
Consider the following matrix with both positive and negative phases:
	$$A=\begin{bmatrix}
	e^{j\pi/3} & 0  \\
	0 & e^{-j\pi/4}  \\
	\end{bmatrix}, 
	$$
	which violates the positive-imaginary assumption in Theorem~\ref{thm:sol_PI}. Regarding the object matrix $A$, next we show that for some $\Phi(\cdot)$ and $r$, there exists an approximant $\tilde{E}$ so that $\Phi(\phi(\tilde{E}^{-1}A\tilde{E}^{-1}))$ in the left-hand side (LHS) of \eqref{eq:sol_PI} is strictly less than its right-hand side (RHS).
	
	Considering $\Phi(x)=\max_i\{|x_i|\}$ and $r=1$, we construct $\tilde{E}=W^*LW$ with
	$$W=\begin{bmatrix}
		0.6 & 0.3  \\
		0.2 & 0.7  \\
	\end{bmatrix}
	~~\text{and}~~
	L=\begin{bmatrix}
		e^{j\pi/6} & 0 \\
		0 & 1 \\
	\end{bmatrix}.$$
	Clearly, $\tilde{E}$ is sectorial with $\prank(\tilde{E})=1=r$. The numerical range of $\tilde{E}^{-1}A\tilde{E}^{-1}$ is plotted in Figure~\ref{fig:nrange_example}, 
	and as can be seen
	$\tilde{E}^{-1}A\tilde{E}^{-1}$ is {also} sectorial.
	Clearly, neither $A$ nor $\tilde{E}^{-1}A\tilde{E}^{-1}$ are positive-imaginary, which means that Theorem~\ref{thm:sol_PI} cannot be used. 
	Nevertheless, $\tilde{E}$ is a feasible point to the minimization problem in \eqref{eq:low_prank_approx_def_matrix}. Moreover,
	the canonical phases of $\tilde{E}^{-1}A\tilde{E}^{-1}$ are
	$\phi_1 \approx -0.055\pi$ and $\phi_2 \approx -0.22\pi$, where in particular $|\phi_2| < 0.23\pi$. It then follows
	\begin{align*}%\label{eq:absolute_Psum_mhm}
	&\min_{E}\{\Phi(\phi(E^{-1}AE^{-1})) \mid E,~E^{-1}AE^{-1}~\text{are sectorial},~\prank (E)\leq r\}\\
	& \leq \Phi(\phi(\tilde{E}^{-1}A\tilde{E}^{-1})) = {|\phi_2(\tilde{E}^{-1}A\tilde{E}^{-1})| < 0.23 \pi} < \frac{\pi}{4}= \Phi(0,-\frac{\pi}{4}){=\text{RHS of}~\eqref{eq:sol_PI}},
	\end{align*}
	which shows that the solutions to Problem~\ref{def:low_prank_v1} {without the assumptions on positive-imaginariness} in general do not possess the structure given in Theorem~\ref{thm:sol_PI}. 

	\begin{figure}
		\centering
		% Requires \usepackage{graphicx}
		%\includegraphics[width=.8\textwidth]{nrang_Err_new1.eps}
		\includegraphics[width=.8\textwidth]{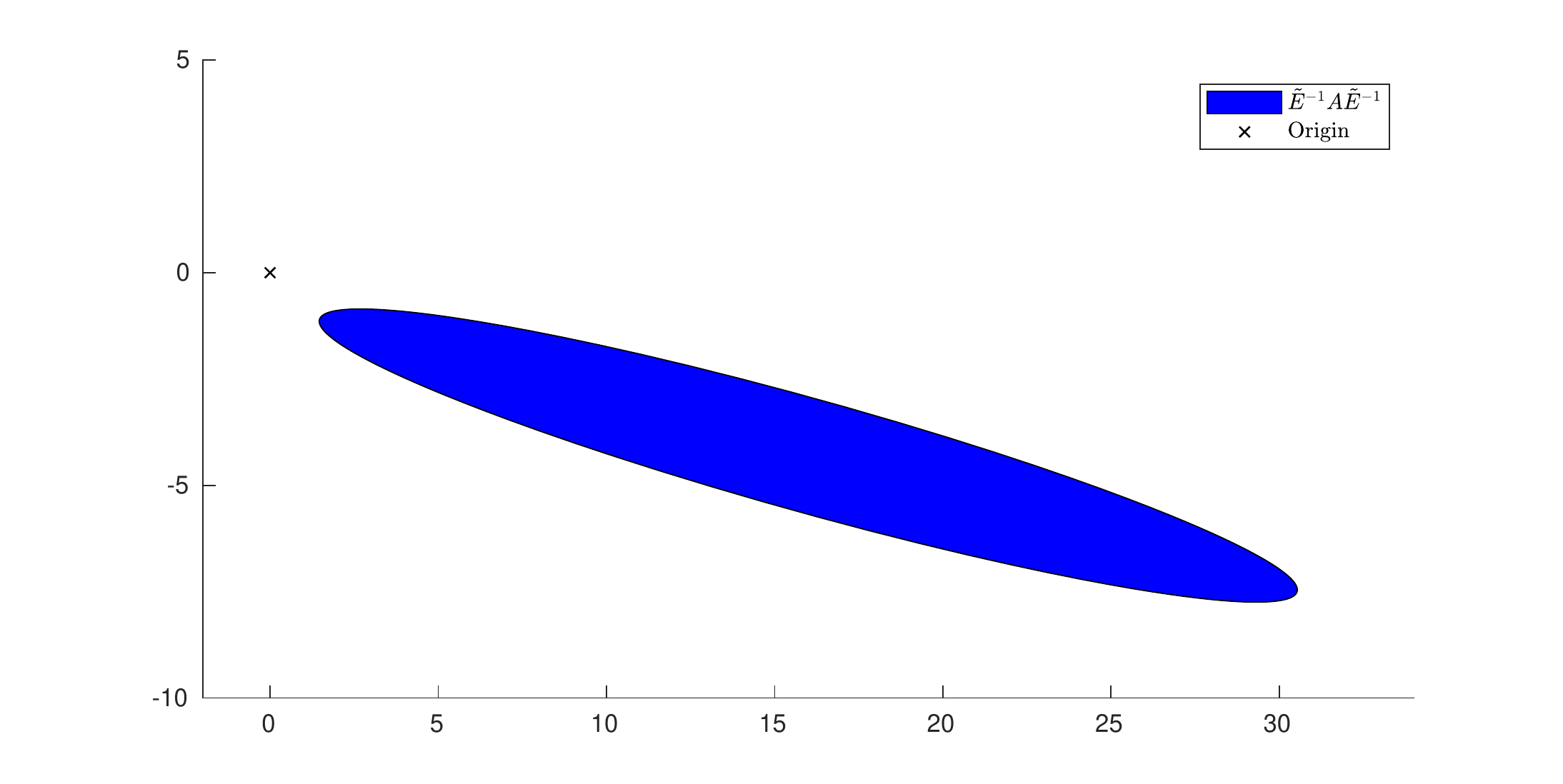}
		\caption{Numerical range of $\tilde{E}^{-1}A\tilde{E}^{-1}$ as in the example.}
		\label{fig:nrange_example}
	\end{figure}
\end{example}

Several interesting and useful corollaries to Theorem~\ref{thm:sol_PI} are as follows. 
\begin{corollary}\label{cor:psi_infty}
	Given $A\in\mathcal{C}[0,\pi)$  and $0\leq r \leq n$, we have
	\begin{align}\label{eq:Sch_Mir_theorem}
	\min_{E}\{\bar{\phi}(E^{-1}AE^{-1})~|~E^{-1}AE^{-1}\in\mathcal{C}[0,\pi),~E~\text{is sectorial},~\prank (E)\leq r\}=\phi_{r+1}(A),
	\end{align}
	{where the optimum is attained on each $\hat{E}\in\mathcal{S}_p(A,r)$.} 
\end{corollary}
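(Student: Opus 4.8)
The plan is to obtain Corollary~\ref{cor:psi_infty} as a direct specialization of Theorem~\ref{thm:sol_PI}, choosing the objective-inducing symmetric gauge function to be the $\ell^\infty$-type function $\Phi_\infty(x) := \max_{1\le k \le n} |x_k|$. First I would verify that $\Phi_\infty$ is indeed a symmetric gauge function in the sense recalled in Section~\ref{sec:pre}: positivity, absolute homogeneity, the triangle inequality, permutation invariance, and invariance under taking absolute values are all immediate for the componentwise maximum of absolute values. Hence $\Phi_\infty$ is a legitimate input to Theorem~\ref{thm:sol_PI}, and the theorem applies verbatim with $\Phi=\Phi_\infty$.

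Next I would identify the two sides of \eqref{eq:sol_PI} under this choice with the two sides of \eqref{eq:Sch_Mir_theorem}. The feasible set in \eqref{eq:Sch_Mir_theorem} is \emph{exactly} the feasible set appearing in \eqref{eq:sol_PI}, so the two minimizations are over the same domain. The key observation is that on this domain the constraint $E^{-1}AE^{-1}\in\mathcal{C}[0,\pi)$ forces every canonical phase $\phi_k(E^{-1}AE^{-1})$ to be nonnegative; consequently, for each feasible $E$ one has $\Phi_\infty(\phi(E^{-1}AE^{-1})) = \max_k \phi_k(E^{-1}AE^{-1}) = \bar\phi(E^{-1}AE^{-1})$. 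Thus the two problems share the same objective value at every feasible point, and therefore have the same optimal value and the same set of minimizers. For the right-hand side, since $A\in\mathcal{C}[0,\pi)$ its phases satisfy $\phi_1(A)\ge\cdots\ge\phi_n(A)\ge 0$, so the vector $(0,\dots,0,\phi_{r+1}(A),\dots,\phi_n(A))$ has largest absolute entry equal to $\phi_{r+1}(A)$; the boundary case $r=n$ is absorbed by the convention $\phi_{n+1}(A)=0$. Combining these identifications with Theorem~\ref{thm:sol_PI} yields both the claimed value $\phi_{r+1}(A)$ and the attainment of the optimum on each $\hat E\in\mathcal{S}_p(A,r)$.

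I do not anticipate a genuine obstacle, as this is a clean corollary rather than an independent result. The only point requiring care is the reduction $\Phi_\infty(\phi(\cdot))=\bar\phi(\cdot)$ on the feasible set: this identity is what converts a symmetric-gauge objective (a max of \emph{absolute values}, as required by Theorem~\ref{thm:sol_PI}) into the purely order-theoretic quantity $\bar\phi$, and it relies precisely on the positive-imaginary constraint guaranteeing nonnegativity of the phases. Were the phases allowed to take negative values, $\Phi_\infty$ and $\bar\phi$ would differ and the corollary would fail, which is consistent with the behaviour illustrated in Example~\ref{ex:counter_PI}.
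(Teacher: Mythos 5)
Your proposal is correct and matches the paper's own proof, which is precisely the one-line specialization of Theorem~\ref{thm:sol_PI} to $\Phi(x)=\max\{|x_1|,\dots,|x_n|\}$; your additional verifications (that $\Phi_\infty$ is a symmetric gauge function, and that the positive-imaginary constraint makes $\Phi_\infty(\phi(\cdot))$ coincide with $\bar\phi(\cdot)$ on the feasible set) are exactly the details the paper leaves implicit.
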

\begin{proof}
	The corollary follows by choosing $\Phi(x)=\max\{|x_1|,|x_2|,\dots,|x_n|\}$ in Theorem~\ref{thm:sol_PI}. 
\end{proof}

The following result shows that when the matrices are constrained to be negative-imaginary, the low-prank approximation problem can be similarly solved. 
\begin{corollary}\label{cor:sol_NI}
	Given a symmetric gauge function $\Phi:~\mathbb{R}^n \to \mathbb{R}$, $A\in\mathcal{C}(-\pi,0]$ and $0 \leq r \leq n$, we have
	\begin{multline}\label{eq:sol_NI}
	\min_E\{\Phi(\phi(E^{-1}AE^{-1}))|~E^{-1}AE^{-1}\in\mathcal{C}(-\pi,0],~E~\text{is sectorial},~\prank (E)\leq r\}\\
	=\Phi(\phi_{1}(A),\dots,\phi_{n-r}(A),0,\dots,0),
	\end{multline}
	where the optimum is attained on each $\hat{E}$ with $\hat{E}^{-1}\in\mathcal{S}_p(A^{-1},r)$. 
\end{corollary}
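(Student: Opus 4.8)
The plan is to reduce the negative-imaginary case to the already-established positive-imaginary case (Theorem~\ref{thm:sol_PI}) by passing to inverses. The key observation is that inversion sends a sectorial matrix to a sectorial matrix and negates its canonical phases. Concretely, if $A = T^* D T$ is a sectorial decomposition with $D = \text{diag}(e^{j\phi_1(A)}, \dots, e^{j\phi_n(A)})$, then $A^{-1} = (T^{-*})^* D^{-1} (T^{-*})$ is a sectorial decomposition of $A^{-1}$ with diagonal unitary factor $D^{-1} = \text{diag}(e^{-j\phi_1(A)}, \dots, e^{-j\phi_n(A)})$. Re-sorting the negated phases into nonincreasing order yields $\phi_k(A^{-1}) = -\phi_{n+1-k}(A)$ for all $k$. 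In particular, $A \in \mathcal{C}(-\pi, 0]$ forces $A^{-1} \in \mathcal{C}[0, \pi)$, so $A^{-1}$ satisfies the hypotheses of Theorem~\ref{thm:sol_PI}; and since negating the phases preserves which phases vanish, $\prank(M^{-1}) = \prank(M)$ for every sectorial $M$.

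Next I would set up the change of variables $E' = E^{-1}$ and $B = E^{-1}AE^{-1}$, noting that $B^{-1} = EA^{-1}E = E'^{-1}A^{-1}E'^{-1}$. Under this substitution the three constraints transform bijectively: $E$ sectorial $\iff E'$ sectorial; $\prank(E) \le r \iff \prank(E') \le r$ (by the prank-invariance above); and $E^{-1}AE^{-1} \in \mathcal{C}(-\pi, 0] \iff E'^{-1}A^{-1}E'^{-1} \in \mathcal{C}[0, \pi)$, since inversion swaps the two imaginary half-planes. Moreover the objective is unchanged: because $\phi(B^{-1})$ consists of the entries of $\phi(B)$ negated and permuted, the symmetric-gauge-function axioms (permutation invariance together with $\Phi(|x|) = \Phi(x)$, which gives $\Phi(-x) = \Phi(x)$) yield $\Phi(\phi(B)) = \Phi(\phi(B^{-1}))$. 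Hence the minimization in \eqref{eq:sol_NI} over $E$ is \emph{exactly} the minimization of Theorem~\ref{thm:sol_PI} applied to the positive-imaginary matrix $A^{-1}$, carried across by the bijection $E \mapsto E^{-1}$.

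Applying Theorem~\ref{thm:sol_PI} to $A^{-1}$ then gives the optimal value $\Phi(0, \dots, 0, \phi_{r+1}(A^{-1}), \dots, \phi_n(A^{-1}))$, attained precisely on $\hat{E}' \in \mathcal{S}_p(A^{-1}, r)$. I would finish by translating this back to $A$: substituting $\phi_k(A^{-1}) = -\phi_{n+1-k}(A)$ turns the nonzero entries $\phi_{r+1}(A^{-1}), \dots, \phi_n(A^{-1})$ into $-\phi_{n-r}(A), \dots, -\phi_1(A)$, and one further use of the sign/permutation invariance of $\Phi$ rewrites the optimal value as $\Phi(\phi_1(A), \dots, \phi_{n-r}(A), 0, \dots, 0)$, the RHS of \eqref{eq:sol_NI}. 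The optimizer $\hat{E} = (\hat{E}')^{-1}$ satisfies $\hat{E}^{-1} = \hat{E}' \in \mathcal{S}_p(A^{-1}, r)$, exactly as claimed.

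I expect the only delicate points to be the careful bookkeeping of the phase reindexing under inversion — the reversal $k \mapsto n+1-k$ caused by re-sorting the negated phases — and the verification that all three constraints (sectoriality, the imaginary half-plane membership, and the prank bound) transform correctly and bijectively under $E \mapsto E^{-1}$. Once these are confirmed, the corollary follows immediately from Theorem~\ref{thm:sol_PI}.
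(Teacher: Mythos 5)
Your proof is correct and follows exactly the paper's approach: the paper's own proof is the one-line remark that $A$ is negative-imaginary if and only if $A^{-1}$ is positive-imaginary, i.e., precisely the reduction to Theorem~\ref{thm:sol_PI} via $E \mapsto E^{-1}$ that you carry out. Your write-up simply makes explicit the bookkeeping (phase reversal $\phi_k(A^{-1}) = -\phi_{n+1-k}(A)$, invariance of $\Phi$ under signs and permutations, and the bijective transformation of the constraints) that the paper leaves to the reader.
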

\begin{proof}
The corollary follows by noting that $A$ is positive-imaginary if and only if $A^{-1}$ is negative-imaginary. 
\end{proof}

\subsection{Connections to Rank Optimization Problems}
A natural question about the low-prank approximation problem is whether it can be equivalently transformed into a rank optimization problem. If the answer would be positive, one can solve such a low-prank problem using existing methods for low-rank approximation problems. The following initial result reveals that some low-rank and low-prank approximation problems are related, while
a counterexample shows that this particular result cannot be strengthened further. The latter indicates that the low-rank and low-prank approximation problems are thus in fact fundamentally different.

\begin{theorem}\label{thm:bounds_of_Prank}
Let $A\in\mathbb{C}^{n\times n}$ be sectorial. Then the following statements are true.
\begin{enumerate}
\item [(a)] 
$\displaystyle \text{\rm prank}(A) \; \geq \; \min_{M>0} \text{\rm rank} (A-M) \; = \; \min_{T~\text{is nonsingular}} \text{\rm rank} (T^*AT-I)$;
\item [(b)]There exists a matrix $R\in\mathbb{C}^{n\times n}$ satisfying that $\text{\rm rank}(R)=\text{\rm prank}(A)$ and
$$A-R>0.$$
\end{enumerate}
\end{theorem}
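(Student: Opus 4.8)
The plan is to work throughout with the sectorial decomposition $A = T^*DT$, where $D = \text{diag}(e^{j\phi_1},\dots,e^{j\phi_n})$, and to observe that both statements reduce to a single explicit diagonal perturbation. Write $k = \text{prank}(A)$ and arrange the decomposition so that the nonzero phases occupy the first $k$ coordinates while $\phi_{k+1}=\cdots=\phi_n = 0$ (so the last $n-k$ diagonal entries of $D$ equal $1$). The one elementary fact I will invoke repeatedly is that congruence $X \mapsto S^*XS$ by a nonsingular $S$ preserves rank.

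For the equality in (a), I would set up a rank-preserving correspondence between the two feasible sets. Given nonsingular $T$, conjugating $T^*AT - I$ by $T^{-*}(\cdot)T^{-1}$ gives $A - T^{-*}T^{-1}$, and $M := T^{-*}T^{-1} = (TT^*)^{-1} > 0$; conversely, any $M > 0$ factors as $M = S^*S$ with $S$ nonsingular, and conjugating $A - M$ by $S^{-*}(\cdot)S^{-1}$ yields $S^{-*}AS^{-1} - I = T^*AT - I$ with $T = S^{-1}$. Since congruence preserves rank, the families $\{\text{rank}(A-M) \mid M > 0\}$ and $\{\text{rank}(T^*AT - I) \mid T \text{ nonsingular}\}$ coincide as sets, hence have equal minima.

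For the inequality in (a) and for (b) I would produce a single witness. Take $D_0 = \text{diag}(d_1,\dots,d_n)$ with $d_i = 1$ whenever $\phi_i = 0$ and $d_i$ an arbitrary positive number otherwise, and set $M := T^*D_0T$ and $R := A - M = T^*(D - D_0)T$. Then $M > 0$, since $D_0 > 0$ and $T$ is nonsingular, while $D - D_0$ is diagonal with entries $e^{j\phi_i} - d_i$, which vanish exactly at the $n-k$ zero-phase coordinates and are nonzero at the remaining $k$; the latter because a nonzero phase forces $e^{j\phi_i}$ off the positive real axis, so $e^{j\phi_i} \neq d_i$ for every $d_i > 0$. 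Consequently $\text{rank}(R) = \text{rank}(A - M) = \text{rank}(D - D_0) = k = \text{prank}(A)$, which at once gives $\min_{M>0}\text{rank}(A-M) \leq \text{prank}(A)$ (proving (a)) and exhibits an $R$ with $\text{rank}(R) = \text{prank}(A)$ and $A - R = M > 0$ (proving (b)).

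The only point requiring care — and the main obstacle, such as it is — is justifying that $\phi_i \neq 0$ genuinely rules out $e^{j\phi_i}$ being a positive real number, so that the diagonal rank count is exactly $k$. This rests on the normalization of the canonical phases: the convention $\bar{\phi}(A) - \underline{\phi}(A) < \pi$ together with $\gamma(A) \in (-\pi,\pi]$ confines all phases to an interval of width less than $\pi$ inside $(-3\pi/2, 3\pi/2)$, in which the only integer multiple of $2\pi$ is $0$; hence $\phi_i \neq 0 \Rightarrow e^{j\phi_i} \notin \mathbb{R}_+$. I would also record the routine facts that a positive definite $M$ admits a nonsingular factorization $M = S^*S$ and that $T^*D_0T > 0$ whenever $D_0 > 0$ and $T$ is nonsingular, both of which are immediate.
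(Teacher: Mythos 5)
Your proposal is correct and follows essentially the same route as the paper: both use the sectorial decomposition $A = T^*DT$, take a positive definite congruence witness (the paper uses $M = T^*T$, i.e., your $D_0 = I$), get $\mathrm{rank}(A-M) = \mathrm{rank}(D - D_0) = \mathrm{prank}(A)$ since congruence preserves rank, and obtain the equality in (a) from the factorization $M = T^{-*}T^{-1}$ of positive definite matrices. Your extra care about the phase normalization (ruling out phases that are nonzero multiples of $2\pi$) is a detail the paper leaves implicit, but it does not change the substance of the argument.
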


\begin{proof}
Note that (a) follows from (b): the first inequality holds since (b) gives a feasible solution {for which} equality holds, and the second equality holds since all positive definite matrices $M$ can be written as $M = T^{-*}T^{-1}$ for some nonsingular matrix $T$ \cite[Thm.~7.2.7]{horn2013matrix}. To show (b), by the sectorial decomposition we obtain that $A=T^*DT$ for some nonsingular $T$. Let $M=T^*T$ and note that $M>0$. Moreover, for
$R=A-M=T^*(D-I)T$ we have that $\text{rank}(R) = \prank(A)$, which proves the theorem.
\end{proof}

In Theorem~\ref{thm:bounds_of_Prank}(a), the inequality cannot be replaced with equality in general, which can be observed from the following example.

\begin{example}
Let
$$A=\begin{bmatrix}11+j & 0\\0 & 11-j\end{bmatrix} \qquad \text{and} \qquad M=\begin{bmatrix}10 & -\sqrt{2}\\-\sqrt{2} & 10\end{bmatrix}>0.$$
Then $\prank(A) = 2 > 1 = \text{rank}(A-M).$
\end{example}

\section{
A Low Phase-Rank Approximation Problem Based on a Geodesic Distance 
}\label{sec:geodesic}
The set of strictly positive-real matrices
$\mathcal{C}(-\pi/2,\pi/2)$ is a smooth manifold, and in this section we {formulate} an alternative low-prank approximation problem for strictly positive-real matrices based on the geodesic distance introduced in \cite{ringh2020finsler}; for an introduction to smooth manifolds, Riemannian manifolds, and Finsler manifolds see, e.g. \cite{lee2013introduction}, \cite{lee2018introduction}, and \cite{bao2000introduction}, respectively. To introduce the geodesic distance, 
first note that any sectorial matrix has a unique decomposition of the form  \cite[Thm.~3.1]{WANG2020PhaseMath}
\[
A = P U P,
\]
where $U \in \mathcal{U}_n$ and $P \in \mathcal{P}_n := $ the set of Hermitian positive definite matrices. {Such a decomposition is called the symmetric polar decomposition.} Since the phases of a matrix are invariant under congruence, the phases of $A$ are the same as the phases of $U$; in particular, this means that for $A \in \mathcal{C}(-\pi/2,\pi/2)$ we have that $U \in  \mathcal{U}_n^a := \mathcal{U}_n  \cap \mathcal{C}(-\pi/2,\pi/2)$, i.e., the set of strictly positive-real unitary matrices.
In fact, by means of the symmetric polar decomposition, $\mathcal{C}(-\pi/2,\pi/2)$ is diffeomorphic to $\mathcal{P}_n \times \mathcal{U}_n^a$ \cite[Thm.~3.2]{ringh2020finsler}. Based on this decomposition,  in \cite{ringh2020finsler} a family of Finsler structures were introduced on $\mathcal{C}(-\pi/2,\pi/2)$ and the corresponding geodesics and geodesic distances were characterized (cf. \cite{okada1982minkowskian}). For brevity and clarity, in the following we will focus on one subfamily, however the results can be extended to the entire family. More specifically, 
for $A,B \in \mathcal{C}(-\pi/2,\pi/2)$ with {the} corresponding symmetric polar decompositions $A=P_AU_AP_A$ and $B=P_BU_BP_B$, we consider the geodesic distance between $A$ and $B$ given by \cite[{Thm.~4.4}]{ringh2020finsler}
\begin{align}\label{eq:geodesic}
\delta_{\mathcal{C}(-\pi/2,\pi/2)}(A,B):=\sqrt{\|\log(P_A^{-1}P_B^2P_A^{-1})\|_{\Phi_1}^{2}+\|\log(U_A^*U_B)\|_{\Phi_2}^2},
\end{align}
where $\| \cdot \|_{\Phi_k}$ is the unitarily invariant norm induced by the smooth symmetric gauge function $\Phi_k$, for $k = 1,2$.%
\footnote{A symmetric gauge function $\Phi$ is called smooth if it is smooth outside of the origin. This is equivalent to that the induced norm $\| \cdot \|_{\Phi}$ is smooth outside of the origin, cf. \cite[Thm.~8]{lewis1996group}.}
Using this distance, we consider the following alternative low-prank approximation problem.
\begin{problem}\label{def:low_prank_v_geodesic}
Given two symmetric gauge functions $\Phi_k:~\mathbb{R}^n \to \mathbb{R}$, $k = 1,2$, a matrix $A \in \mathcal{C}(-\pi/2,\pi/2)$, and $0 \leq r \leq n$, find
\begin{equation}\label{eq:phase_rank_approx}
\begin{aligned}
\inf_{A_r \in \mathcal{C}(-\pi/2,\pi/2)} & \quad \delta_{\mathcal{C}(-\pi/2,\pi/2)}(A, A_r) \\
\text{subject to} & \quad \prank(A_r) \leq r.
\end{aligned}
\end{equation}
\end{problem}

The solution to this problem is given by the following theorem.
{For $A\in\mathcal{C}(-\pi/2,\pi/2)$, let $[\tilde{\phi}_k(A)]_{k = 1}^n$ be its phases satisfying the following order:}
$${|\tilde{\phi}_1(A)| \geq |\tilde{\phi}_2(A)| \geq \cdots \geq |\tilde{\phi}_n(A)|.}$$

\begin{theorem}\label{thm:geo_dist_approx}
The infimum in \eqref{eq:phase_rank_approx} is attained by $\hat{A}_r = P \hat{U}_r P$, where $A = PUP$ is the symmetric polar decomposition of $A$ and $\hat{U}_r$ is obtained from $U$ by setting the $n-r$ phases of $U$ with smallest absolute value equal to $0$. That is, for $U = V^*DV$ being a diagonalization of $U$ with $D = \text{\rm diag}(e^{j\tilde{\phi}_1(U)}, \dots, e^{j\tilde{\phi}_n(U)})$, the optimal $\hat{U}_r$ is given by $\hat{U}_r = V^*D_r V$ with
$$D_r = \text{\rm diag}(e^{j\tilde{\phi}_1({U})},\dots,e^{j\tilde{\phi}_r({U})},0,\dots,0).$$
\end{theorem}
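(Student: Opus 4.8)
The plan is to exploit the product structure of the geodesic distance \eqref{eq:geodesic} to decouple the problem into an \emph{unconstrained} part on the positive-definite factors and a \emph{constrained} part on the unitary factors. Writing the symmetric polar decompositions $A = PUP$ and $A_r = P_{A_r}U_{A_r}P_{A_r}$, the squared distance splits as $\|\log(P^{-1}P_{A_r}^2P^{-1})\|_{\Phi_1}^2 + \|\log(U^*U_{A_r})\|_{\Phi_2}^2$. The crucial observation is that, since phases are congruence-invariant, $\prank(A_r)=\prank(U_{A_r})$, so the constraint $\prank(A_r)\le r$ restricts only the unitary factor $U_{A_r}$ and leaves $P_{A_r}$ entirely free. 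First I would therefore minimize the two summands separately. The $\mathcal{P}_n$-summand is nonnegative and vanishes precisely when $P_{A_r}^2=P^2$, i.e. $P_{A_r}=P$ by uniqueness of the positive-definite square root, and this choice is always admissible. Hence the problem reduces to minimizing $\|\log(U^*U_{A_r})\|_{\Phi_2}$ over positive-real unitary $U_{A_r}$ with at most $r$ nonzero phases.

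For this unitary subproblem I would first rewrite the objective in terms of phases. For any unitary $Q$ the principal logarithm $\log Q$ is skew-Hermitian with eigenvalues $j\mu_k$, where $\mu_k\in(-\pi,\pi]$ are the arguments of the eigenvalues of $Q$; since $U,U_{A_r}\in\mathcal{U}_n^a$ have phases in $(-\pi/2,\pi/2)$, the product $Q=U^*U_{A_r}$ is sectorial and $\log Q$ is well defined. The singular values of $\log Q$ are the $|\mu_k|$, so $\|\log(U^*U_{A_r})\|_{\Phi_2}=\Phi_2(|\mu_1|,\dots,|\mu_n|)$, a symmetric gauge function of the absolute arguments of $Q$. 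Next I would verify that the candidate $\hat{U}_r=V^*D_rV$ attains the claimed value: because $\hat{U}_r$ shares its eigenvectors with $U=V^*DV$ and differs only in the $n-r$ phases of smallest modulus, which are reset to $0$ (the corresponding eigenvalues set to $1$), the product $U^*\hat{U}_r$ has eigenvalue-arguments equal to $0$ with multiplicity $r$ together with $-\tilde{\phi}_k(U)$ for $k>r$, giving $\Phi_2(0,\dots,0,|\tilde{\phi}_{r+1}(U)|,\dots,|\tilde{\phi}_n(U)|)$. Since $\prank(\hat{U}_r)\le r$, the matrix $\hat{A}_r=P\hat{U}_rP$ is feasible and satisfies $P_{\hat{A}_r}=P$, confirming that the displayed value is attained.

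The heart of the proof is the matching lower bound: for every feasible $U_{A_r}$,
\begin{equation*}
\Phi_2(|\mu_1|,\dots,|\mu_n|)\;\ge\;\Phi_2(0,\dots,0,|\tilde{\phi}_{r+1}(U)|,\dots,|\tilde{\phi}_n(U)|).
\end{equation*}
Since symmetric gauge functions are monotone in the decreasing rearrangement of the absolute values, it suffices to show that the $k$-th largest value among $\{|\mu_i|\}$ is at least $|\tilde{\phi}_{r+k}(U)|$ for $1\le k\le n-r$. To prove this I would use that $U_{A_r}$ acts as the identity on a subspace $\mathcal{V}$ with $\dim\mathcal{V}\ge n-r$, so that for every $x\in\mathcal{V}$ one has $x^*Qx=x^*U^*x=\overline{x^*Ux}$; thus the numerical-range angles of $Q$ on $\mathcal{V}$ are exactly the negatives of those of $U$ on $\mathcal{V}$. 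Combining this identity with the Courant--Fischer--type min-max characterization and the interlacing of canonical phases under restriction to a subspace (as developed in \cite{WANG2020PhaseMath}) then forces enough eigenvalue-arguments of $Q$ to be large in absolute value.

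I expect this last step to be the main obstacle. The difficulty is that $U$ and $U_{A_r}$ do not commute in general, so $\log(U^*U_{A_r})\ne\log U_{A_r}-\log U$ and one cannot simply invoke the Schmidt--Mirsky theorem (Lemma~\ref{lem:Schmidt_Mirsky}) on the skew-Hermitian logarithms. The needed inequality is a genuinely non-abelian, Weyl/Lidskii-type statement comparing the arguments of a product of sectorial matrices to the individual phases. The most delicate bookkeeping is the handling of signs: the target collects the $n-r$ phases smallest in modulus, mixing positive and negative phases of $U$, so the min-max argument must be applied on both sides (an upper bound via the $\max$-form controlling the positive phases and a lower bound via the $\min$-form controlling the negative phases) and the two halves recombined. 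An equivalent route would be to establish the product inequality $\phi_{i+j-1}(XY)\le\phi_i(X)+\phi_j(Y)$ for phases directly and then run the classical Schmidt--Mirsky argument verbatim in the additive phase domain.
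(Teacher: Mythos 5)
Your opening moves coincide with the paper's own proof: the squared geodesic distance separates in $(P_{A_r},U_{A_r})$, the prank constraint touches only the unitary factor (by congruence invariance of phases), the positive-definite summand is annihilated by $P_{A_r}=P$, and the problem reduces to minimizing $\Phi_2(|\varphi(U^*U_{A_r})|)$ over $U_{A_r}\in\mathcal{U}_n^a$ with $\prank(U_{A_r})\le r$; your verification that $\hat U_r$ attains $\Phi_2(0,\dots,0,|\tilde\phi_{r+1}(U)|,\dots,|\tilde\phi_n(U)|)$ is also the paper's. The divergence---and the gap---is in the matching lower bound, which is the entire mathematical content of the theorem. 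The paper does not prove (and does not need) your entrywise claim that the $k$-th largest $|\mu_i|$ is at least $|\tilde\phi_{r+k}(U)|$; it establishes only the weak majorization $|\varphi(U^{-1}\hat U_r)|\prec_w|\varphi(U^{-1}U_r)|$, and it does so by importing a nontrivial published result, \cite[Thm.~2]{chau2011metrics}: $\bigl||\varphi(U^{-1})|^{\downarrow}-|\varphi(U_r)|^{\downarrow}\bigr|\prec_w|\varphi(U^{-1}U_r)|^{\downarrow}$. Combined with the exact equality $\bigl||\varphi(U^{-1})|^{\downarrow}-|\varphi(\hat U_r)|^{\downarrow}\bigr|=|\varphi(U^{-1}\hat U_r)|^{\downarrow}$ (valid because $\hat U_r$ commutes with $U$), a vector-level minimization in the preorder $\prec_w$, transitivity, and Fan's dominance theorem \cite{fan1951maximum}, this closes the argument. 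In your proposal the corresponding ingredient is exactly what is missing: you yourself flag the multiplicative Weyl/Lidskii-type inequality as ``the main obstacle'' and leave it unproven; it is not available in \cite{WANG2020PhaseMath}, and in this generality it is a deep fact, of the same nature as the multiplicative Horn problem for products of unitary matrices.

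Moreover, the route you sketch toward that missing inequality cannot work as stated, for two concrete reasons. First, your assertion that $Q=U^*U_{A_r}$ is sectorial is false: take $U=\diag(e^{j(\pi/2-\epsilon)},e^{-j(\pi/2-\epsilon)})\in\mathcal{U}_2^a$ and $U_{A_r}=\diag(1,e^{2j\epsilon})\in\mathcal{U}_2^a$ with $\prank(U_{A_r})=1$; then $Q=\diag(e^{-j(\pi/2-\epsilon)},e^{j(\pi/2+\epsilon)})$ has antipodal eigenvalues, so $0\in\mathcal{W}(Q)$. The principal logarithm still exists (the eigenvalue arguments avoid $\pm\pi$), but $Q$ has no canonical phases, so the compression/interlacing theory of \cite[Sec.~4]{WANG2020PhaseMath} that you intend to apply to $Q$ is simply not applicable. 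Second, even when $Q$ is sectorial, the information your argument extracts is demonstrably insufficient. Your identity says that the compressions of $Q$ and of $U^*$ to the fixed subspace $\mathcal{V}$ of $U_{A_r}$ agree; interlacing then yields only the one-sided chains $\phi_k(Q)\ge\phi_k(Q_{\mathcal{V}})\ge\phi_{k+r}(U^*)$ and $\phi_{k+r}(Q)\le\phi_k(Q_{\mathcal{V}})\le\phi_k(U^*)$. For $n=2$, $r=1$, and $U$ with phases $(a,-b)$, $0<b\le a<\pi/2$, these chains force only $\phi_1(Q)\ge-a$ and $\phi_2(Q)\le b$, which is perfectly compatible with $Q$ having both eigenvalue arguments of modulus strictly less than $b$ (say $(b/2,b/3)$); the theorem, however, requires every feasible $Q$ to have an argument of modulus at least $b$. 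Ruling out such $Q$ is precisely the non-abelian content that interlacing on a common compression cannot see, because it discards how $Q$ acts off $\mathcal{V}$. So the proof has a genuine hole at its central step; to repair it along the paper's lines, replace the entrywise goal and the interlacing argument by the weak-majorization triangle inequality of \cite{chau2011metrics}, or else supply an independent proof of the multiplicative Weyl inequality, which is a substantial undertaking in its own right.
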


\begin{proof}
See \ref{app:proof_bf_geo_dist_approx}
\end{proof}

Note that a minimizer to \eqref{eq:phase_rank_approx} might not be unique. In fact, it is unique if and only if $|\tilde{\phi}_{r}(U)| > |\tilde{\phi}_{r+1}(U)|$.
%%%%%%%%%%%
Moreover, the geodesic distance \eqref{eq:geodesic} can be extended to sets of sectorial matrices {whose numerical ranges are bounded in a common half plane without intersecting the negative real axis,} i.e., to sets $\mathcal{C}(-\pi/2+\alpha, \pi/2 + \alpha)$ for any $\alpha \in (-\pi/2, \pi/2)$ \cite[{Rem.~3}]{ringh2020finsler}. In particular, for $A,B  \in \mathcal{C}(-\pi/2+\alpha,\pi/2+\alpha)$, the distance is computed by using the rotation $e^{-j \alpha}$ to make them positive real, and is hence given by 
\begin{align*}
\delta_{\mathcal{C}(-\pi/2+\alpha,\pi/2+\alpha)}(A, B) & :=\delta_{\mathcal{C}(-\pi/2,\pi/2)}(e^{-j\alpha}A, e^{-j\alpha}B)  \\
& = \sqrt{\|\log(P_A^{-1}P_B^2P_A^{-1})\|_{\Phi_1}^{2}+\|\log(U_A^*U_B)\|_{\Phi_2}^2},
\end{align*}
and the algebraic expression is thus the same as in \eqref{eq:geodesic}. By adapting the arguments in the proof of Theorem~\ref{thm:geo_dist_approx}, we get the following corollary.

\begin{corollary}\label{cor:geo_dist}
	Let $A \in \mathcal{C}(-\pi/2+\alpha,\pi/2+\alpha)$ with $\alpha\in(-\pi/2,\pi/2)$. The infimum to
		\begin{align*}
		\inf_{A_r\in \mathcal{C}(-\pi/2+\alpha,\pi/2+\alpha)} & \quad \delta_{\mathcal{C}(-\pi/2+\alpha,\pi/2+\alpha)}(A, A_r) \\
		\text{\rm subject to} & \quad \prank(A_r) \leq r,
		\end{align*}
		is attained on each $\hat{A}_r$ as described in Theorem~\ref{thm:geo_dist_approx}.
\end{corollary}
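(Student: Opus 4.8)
The plan is to adapt the proof of Theorem~\ref{thm:geo_dist_approx} to the rotated cone, using the rotation $A \mapsto e^{-j\alpha}A$ to transfer the distance formula while treating the prank constraint directly in $\mathcal{C}(-\pi/2+\alpha,\pi/2+\alpha)$. First I would track what the rotation does to the symmetric polar decomposition: if $A = PUP$, then $e^{-j\alpha}A = P(e^{-j\alpha}U)P$, and since $e^{-j\alpha}U$ is unitary with phases shifted into $(-\pi/2,\pi/2)$, uniqueness of the decomposition shows that the Hermitian factor $P$ is unchanged and only the unitary factor is rotated. The key observation is that the scalar $e^{-j\alpha}$ cancels in $(e^{-j\alpha}U)^*(e^{-j\alpha}U_r) = U^*U_r$, so the unitary-part term $\|\log(U^*U_r)\|_{\Phi_2}$ in \eqref{eq:geodesic} is literally unchanged, as is the positive-definite term; hence $\delta_{\mathcal{C}(-\pi/2+\alpha,\pi/2+\alpha)}(A,A_r)$ has exactly the same algebraic form as in the case $\alpha = 0$.

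The only genuine difference from Theorem~\ref{thm:geo_dist_approx} lies in the feasible set: the approximant $A_r$ must now lie in $\mathcal{C}(-\pi/2+\alpha,\pi/2+\alpha)$, so the phases of its unitary factor $U_r$ are constrained to the arc $(-\pi/2+\alpha,\pi/2+\alpha)$ rather than $(-\pi/2,\pi/2)$. Because the phases are congruence-invariant we have $\prank(A_r) = \prank(U_r)$, so the constraint reduces to requiring that $U_r$ have at most $r$ nonzero phases. I would then re-run the two-step minimization from the proof of Theorem~\ref{thm:geo_dist_approx}: first set $P_r = P$ to annihilate the $\Phi_1$-term, which is permissible since the Hermitian factor does not affect the phases and hence does not affect the prank; then minimize $\|\log(U^*U_r)\|_{\Phi_2}$ over admissible $U_r$.

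For this reduced unitary sub-problem, the candidate $\hat{U}_r$ obtained by zeroing the $n-r$ smallest-magnitude phases of $U$ remains feasible here, because $0$ lies in the open arc $(-\pi/2+\alpha,\pi/2+\alpha)$ for every $\alpha \in (-\pi/2,\pi/2)$, so both the surviving phases of $U$ and the inserted zeros stay within the admissible arc. The optimality argument of Theorem~\ref{thm:geo_dist_approx}---that the minimizing $U_r$ may be taken simultaneously diagonalizable with $U$ and that zeroing the smallest phases yields the majorization-minimal phase-error vector---then carries over, since it depends only on the phases lying in a common arc of length at most $\pi$.

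The step requiring the most care, which I regard as the main obstacle, is verifying that this majorization and simultaneous-diagonalization argument is genuinely insensitive to the $\alpha$-shift. Concretely, I would check that translating the admissible phase interval neither invalidates the principal-branch logarithm in \eqref{eq:geodesic}---the arc has length exactly $\pi$ and is open, so any two admissible phases differ by strictly less than $\pi$ in absolute value, keeping $U^*U_r$ away from the eigenvalue $-1$ and its logarithm single-valued with eigenvalues equal to the phase differences in $(-\pi,\pi)$---nor alters the majorization inequality controlling the unitary-part distance. Granting this, the same $\hat{A}_r = P\hat{U}_r P$ attains the infimum, which is exactly the assertion of the corollary.
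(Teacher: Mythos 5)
Your proposal is correct and takes essentially the same route as the paper: the paper also defines $\delta_{\mathcal{C}(-\pi/2+\alpha,\pi/2+\alpha)}(A,B):=\delta_{\mathcal{C}(-\pi/2,\pi/2)}(e^{-j\alpha}A,e^{-j\alpha}B)$, observes that the rotation leaves the Hermitian factor untouched and cancels in $U_A^*U_B$ so the algebraic expression coincides with \eqref{eq:geodesic}, and then obtains the corollary ``by adapting the arguments in the proof of Theorem~\ref{thm:geo_dist_approx}.'' Your explicit checks --- that the prank constraint passes to the unitary factor, that $\hat{A}_r=P\hat{U}_rP$ remains feasible since $0$ lies in the rotated arc, and that the principal logarithm and majorization steps are insensitive to the $\alpha$-shift --- are exactly the adaptation the paper leaves implicit.
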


\begin{remark}\label{rem:extension_half_closed}
Note that the distance \eqref{eq:geodesic} can be directly extended to i) symmetric gauge functions that are not smooth, and ii) sets of sectorial matrices with half-closed angular intervals, i.e., $\mathcal{C}(-\pi/2 + \alpha, \pi/2+\alpha]$ and $\mathcal{C}[-\pi/2 + \alpha, \pi/2+\alpha)$ for $\alpha \in (-\pi/2, \pi/2)$. In this case, the manifold structure (might) be destroyed, {but} the solution to the approximation problem corresponding to \eqref{eq:phase_rank_approx} remains the same.
\end{remark}

\subsection{Connections to the Original Formulation}
The alternative formulation of the low-prank approximation problem in terms of the geodesic distance is related to the original formulation in Problem~\ref{def:low_prank_v1}, even though the two formulations are in general not equivalent. Nevertheless, if we impose additional requirements, such as positive-imaginariness, then from Theorem~\ref{thm:sol_PI} and Corollary \ref{cor:geo_dist} we see that the minimum approximation error, i.e., the optimal value of the objective function, is in fact the same in both problems. 
Moreover, if we also require the matrix $A \in \mathcal{C}[0,\pi) $ to be unitary, then the following result shows that both the problems share the same set of solutions.
% In general, the two formulations are not equivalent.
% while if we impose additional requirements, such as unitariness and positive-imaginariness, the formulations are equivalent. The following result can be inferred from either of Theorem~\ref{thm:sol_PI} or Corollary~\ref{cor:geo_dist}. 
\begin{theorem}\label{thm:relation}
	Let $\Phi:~\mathbb{R}^n \to \mathbb{R}$ be a symmetric gauge function, $A\in\mathcal{C}[0,\pi) \cap  \mathcal{U}_n$, and $0\leq r \leq n$. Then the set of optimal solution to Problem~\ref{def:low_prank_v1} is the set $ \mathcal{S}_p(A,r)$, which consists of only unitary matrices.
	Moreover, let
	\begin{equation}\label{eq:thmrelation_1}
	\hat{\mathcal{B}} := \argmin_{B}\{\delta_{\mathcal{C}[0, \pi)}(A, B) |~B \in \mathcal{C}[0,\pi),~\prank(B)\leq r\}.
	\end{equation}
	 Then $\mathcal{S}_p(A,r) =  \hat{\mathcal{B}}^{\frac{1}{2}}$, and the optimal value for both Problem~\ref{def:low_prank_v1} and \eqref{eq:thmrelation_1} is given by $\Phi(0,\dots,0,\phi_{r+1}(A),\dots,\phi_{n}(A))$.
\end{theorem}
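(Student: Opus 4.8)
The plan is to split the statement into three linked assertions and to exploit the fact that a sectorial \emph{unitary} matrix has the trivial symmetric polar decomposition $A = I\cdot A\cdot I$, so that $P_A = I$ and $U_A = A$; this immediately kills the $P$-dependent term of the geodesic distance in \eqref{eq:geodesic} and is what makes the two formulations collapse onto the same solution set. Concretely, I would establish: (i) every element of $\mathcal{S}_p(A,r)$ is unitary; (ii) squaring maps $\mathcal{S}_p(A,r)$ onto the minimizer set $\hat{\mathcal{B}}$ of the geodesic problem, giving $\mathcal{S}_p(A,r) = \hat{\mathcal{B}}^{1/2}$; and (iii) both optimal values equal $\Phi(0,\dots,0,\phi_{r+1}(A),\dots,\phi_n(A))$, where $\Phi$ is identified with $\Phi_2$ in \eqref{eq:geodesic} (the choice of $\Phi_1$ being immaterial here).

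The technical core is a rigidity lemma: if $A\in\mathcal{C}[0,\pi)\cap\mathcal{U}_n$ has sectorial decomposition $A = T^*DT$ with $D = \text{diag}(e^{j\phi_1},\dots,e^{j\phi_n})$, then $T$ must be unitary. Setting $H = TT^*>0$, the identity $A^*A = I$ gives $D^*HD = H^{-1}$; inverting and re-substituting yields $D^{*2}HD^{2} = H$, so $D^2$ commutes with $H$. Since $A\in\mathcal{C}[0,\pi)$ forces $|\phi_k-\phi_l|<\pi$, the numbers $e^{2j\phi_k}$ coincide exactly on equal-phase indices, whence $H$ is block diagonal along the equal-phase groups; on each such block $D$ is a scalar multiple of the identity and $D^*HD = H^{-1}$ collapses to $H^2 = I$, forcing $H = I$ by positivity. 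Hence $T\in\mathcal{U}_n$, so each $\hat E = T^*\text{diag}(e^{j\phi_1/2},\dots,e^{j\phi_r/2},1,\dots,1)T\in\mathcal{S}_p(A,r)$ is a product of unitaries and is itself unitary, proving (i). The same unitarity gives $\hat E^2 = T^*\text{diag}(e^{j\phi_1},\dots,e^{j\phi_r},1,\dots,1)T$, which is exactly the matrix obtained from $A$ by setting its $n-r$ smallest phases to $0$; by Theorem~\ref{thm:geo_dist_approx}, extended to $\mathcal{C}[0,\pi)$ through Corollary~\ref{cor:geo_dist} and Remark~\ref{rem:extension_half_closed}, this is precisely a minimizer in $\hat{\mathcal{B}}$. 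As $\hat E$ is its principal square root (its phases are the halves $\phi_1/2,\dots,\phi_r/2,0,\dots,0\in[0,\pi/2)$), squaring is a bijection between $\mathcal{S}_p(A,r)$ and $\hat{\mathcal{B}}$, yielding $\mathcal{S}_p(A,r) = \hat{\mathcal{B}}^{1/2}$, which is (ii).

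For (iii) I would take $\Phi_2 = \Phi$ and note that any geodesic minimizer $\hat A_r$ is unitary (since $P_A = I$), so $P_A = P_{\hat A_r} = I$ and the first term of $\delta_{\mathcal{C}[0,\pi)}(A,\hat A_r)$ vanishes. Writing $A = V^*\text{diag}(e^{j\phi_k})V$ with $V$ unitary, a direct computation gives $A^*\hat A_r = V^*\text{diag}(1,\dots,1,e^{-j\phi_{r+1}},\dots,e^{-j\phi_n})V$, whose principal logarithm has singular values $0,\dots,0,\phi_{r+1},\dots,\phi_n$. Hence $\delta_{\mathcal{C}[0,\pi)}(A,\hat A_r) = \|\log(A^*\hat A_r)\|_{\Phi_2} = \Phi(0,\dots,0,\phi_{r+1}(A),\dots,\phi_n(A))$, which is exactly the optimal value of Problem~\ref{def:low_prank_v1} supplied by Theorem~\ref{thm:sol_PI}. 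Combined with Theorem~\ref{thm:sol_PI}, which already exhibits every $\hat E\in\mathcal{S}_p(A,r)$ as an optimizer of Problem~\ref{def:low_prank_v1}, this settles the value claim and the inclusion $\mathcal{S}_p(A,r)\subseteq\{\text{optimizers of Problem~\ref{def:low_prank_v1}}\}$.

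I expect the main obstacle to be the \emph{exactness} of the first assertion, i.e.\ that no feasible $E$ outside $\mathcal{S}_p(A,r)$ can attain the optimal value of Problem~\ref{def:low_prank_v1}. The reverse inclusion requires the equality-case analysis hidden in the proof of Theorem~\ref{thm:sol_PI}: one must show that attaining $\Phi(0,\dots,0,\phi_{r+1},\dots,\phi_n)$ forces $E^{-1}AE^{-1}$ to have exactly the phases $(0,\dots,0,\phi_{r+1},\dots,\phi_n)$ in the eigenstructure inherited from $A$, and that this in turn forces $E$ to be a half-truncation. This is delicate because a general symmetric gauge function $\Phi$ need not be strictly monotone (e.g.\ $\Phi=\max$), so equality in $\Phi$ alone does not pin down the phase vector; the natural remedy is to transport optimality through the bijection $E\mapsto E^2$ to the geodesic problem, whose minimizer set is \emph{completely} characterized by Theorem~\ref{thm:geo_dist_approx} (including its non-uniqueness, which occurs precisely when $|\tilde\phi_r(U)| = |\tilde\phi_{r+1}(U)|$). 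A secondary but genuine nuisance is matching the two set-parametrizations when phases repeat or tie at the truncation boundary, and keeping the comparison within the common positive-imaginary constraint shared by Theorem~\ref{thm:sol_PI} and \eqref{eq:thmrelation_1}.
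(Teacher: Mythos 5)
Your proposal is correct and follows essentially the same route as the paper's own proof: for unitary $A$ the symmetric polar decomposition is trivial ($P_A=I$, $U_A=A$), every sectorial decomposition $A=T^*DT$ of such an $A$ has unitary $T$, squaring gives $\mathcal{S}_p(A,r)=\hat{\mathcal{B}}^{\frac{1}{2}}$, and the common optimal value is read off from Theorem~\ref{thm:sol_PI} and from Corollary~\ref{cor:geo_dist} together with Remark~\ref{rem:extension_half_closed}. The differences are in level of detail, and both favor you. First, the paper merely asserts that unitarity of $A$ forces $T$ to be unitary; your rigidity lemma (from $A^*A=I$ deduce $D^*HD=H^{-1}$ with $H=TT^*$, so $D^2$ commutes with $H$, so $H$ is block diagonal over equal-phase groups, so $H^2=I$ and hence $H=I$) is a correct proof of exactly that step. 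Second, the ``exactness'' obstacle you flag---that no feasible $E$ outside $\mathcal{S}_p(A,r)$ attains the optimum of Problem~\ref{def:low_prank_v1}---is genuine, but the paper's proof does not address it at all: it cites Theorem~\ref{thm:sol_PI}, which only establishes that the optimum is \emph{attained} on each element of $\mathcal{S}_p(A,r)$, as if it characterized the whole solution set. Your skepticism about non-strictly-monotone gauge functions and tied phases is well placed: for $A=e^{j\pi/2}I_2$, $r=1$, and $\Phi=\max$, the matrix $E=I_2$ is feasible and attains the optimal value $\pi/2$, yet lies outside $\mathcal{S}_p(A,1)$ (whose elements are unitary with eigenvalues $e^{j\pi/4}$ and $1$); likewise $I_2\in\hat{\mathcal{B}}$ while $I_2^{\frac{1}{2}}=I_2\notin\mathcal{S}_p(A,1)$. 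So the set equalities in the theorem actually require extra hypotheses (e.g., strict monotonicity of $\Phi$ and $\phi_r(A)>\phi_{r+1}(A)$), no transport argument through $E\mapsto E^2$ can repair them in full generality, and your attempt proves everything the published proof actually proves while being more candid about what remains open.
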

\begin{proof}
By Theorem~\ref{thm:sol_PI} the set $\mathcal{S}_p(A,r)$ gives the optimal solutions to Problem~\ref{def:low_prank_v1} and the corresponding optimal value is $\Phi(0,\dots,0,\phi_{r+1}(A),\dots,\phi_{n}(A))$.
Moreover, if $A$ is unitary then all sectorial decompositions $A = T^*\text{diag}(e^{j\phi_{1}},e^{j\phi_{2}},\dots,e^{j\phi_{n}})T$ of $A$ are with unitary matrices $T$. Therefore, $\mathcal{S}_p(A,r)$ consists of only unitary matrices.
Next, by Corollary~\ref{cor:geo_dist} and Remark~\ref{rem:extension_half_closed}, the optimal value to \eqref{eq:thmrelation_1} is $\Phi(0,\dots,0,\phi_{r+1}(A),\dots,\phi_{n}(A))$ and we have that $\hat{\mathcal{B}} \subset \mathcal{U}_n$. In fact, a direct calculation shows that for any $E \in \mathcal{S}_p(A,r)$, $E^2 \in \hat{\mathcal{B}}$, and for any $B \in \hat{\mathcal{B}}$, $B^{\frac{1}{2}} \in \mathcal{S}_p(A,r)$. Hence, $\mathcal{S}_p(A,r) = \hat{\mathcal{B}}^{\frac{1}{2}}$, which proves the theorem.
\end{proof}

To see that the two problem formulations are not equivalent in general, recall Example~\ref{ex:counter_PI}. By taking $\Phi(x)=\Phi_1(x)=\max_i\{|x_i|\}$ and $r=1$ in both the problems formulations, we have
\begin{align*}%\label{eq:absolute_Psum_mhm}
&\min_{E}\{\Phi(\phi(E^{-1}AE^{-1})) \mid E,~E^{-1}AE^{-1}~\text{are sectorial},~\prank (E)\leq r\}\leq \Phi(\phi(\tilde{E}^{-1}A\tilde{E}^{-1}) \\
& = {|\phi_2(\tilde{E}^{-1}A\tilde{E}^{-1})|} \! < \! \Phi(0,-\frac{\pi}{4}) =\inf_{A_r} \{ \delta_{\mathcal{C}(-\pi/2,\pi/2)}(A, A_r) \mid A_r\in\mathcal{C}(-\pi/2,\pi/2),~ \prank(A_r) \! \leq \! r\}.
\end{align*}
This example thus illustrates {a} difference between the two formulations. We expect both of them to be useful in different application settings.

\section{Conclusion and Future Directions}\label{sec:conc}
{This paper investigates a low phase-rank approximation problem, serving as a counterpart to the well-studied low rank approximation problem. The problem is formulated in two forms --- one in terms of matrix geometric means and the other in terms of geodesic distances {between} sectorial matrices.  Characterizations of solutions to the low-prank approximation problems are obtained, which are presented with a similar {flavor to that of} the well-known Schmidt-Mirsky theorem. }

As for future research, we may explore more connections of the low-prank approximation to real-world applications, such as model reduction, data compression and image processing.  On the other hand, there are technical issues that are worth further exploration. In particular, as currently we focus on matrices that are sectorial within the same sector, we may try to remove such constraints and study the approximation problem among matrices that are sectorial in any sector. {Another direction that is worth exploration lies in extending matrices to linear operators over infinite-dimensional linear spaces.} 

\section*{Acknowledgement}
The authors would like to thank Wei Chen, Dan Wang and Chao Chen for valuable discussions. 

This work was supported in part {by the Guangdong Science and Technology Department, China, under the project No.~2019B010117002, the Research Grants Council of Hong Kong Special Administrative Region, China, under the project GRF 16200619}, and the Knut and Alice Wallenberg foundation, Stockholm, Sweden, under grant KAW 2018.0349.

\appendix
\section{Proof of Theorem~\ref{thm:sol_PI}}\label{app:proof_thm_sol_PI}
Denote a special class of symmetric gauge functions, called the Ky-Fan $k$-norms \cite{kyfan1955norm}, $k=1,2,\dots,n$, by
$$\Phi_k(x):=\text{sum of the largest}~k~\text{terms in}~ \{|x_1|,|x_2|,\dots,|x|_n\}.$$
For $A\in\mathcal{C}[0,\pi)$, define the family of the ``Ky-Fan'' phase values (i.e., partial sum of the canonical phases from the larger side) as
$$\psi_m(A):=\Phi_m(\phi(A)),~m=1,2,\dots, n.$$
By \cite[Lemma~4.4]{WANG2020PhaseMath}, we obtain that 
\begin{align}\label{eq:sum_p_matrix}\psi_m(A)=\max_{X\in\mathbb{C}^{n\times m}}\sum_{k=1}^m\angle\lambda_k(X^*AX),~m=1,2,\dots, n.\end{align}
\begin{lemma}\label{lem:sectorial_perturb}
	Let $\theta\in(-\pi,0]$, $A,B\in\mathcal{C}[\theta,\theta+\pi)$ and $\underline{\phi}(A)\geq \bar{\phi}(B)$. Then it holds that $$\psi_m(A+B)\geq\psi_m(B)$$
	for all $m=1,2,\dots,n$.
\end{lemma}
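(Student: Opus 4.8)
The plan is to reduce the statement, via the variational formula \eqref{eq:sum_p_matrix}, to a single determinant-phase inequality for $m\times m$ compressions, and then to settle that inequality by a one-parameter homotopy whose derivative I can sign explicitly. First I would fix $X_0\in\mathbb{C}^{n\times m}$ of full column rank attaining the maximum in \eqref{eq:sum_p_matrix} for $B$, so that $\psi_m(B)=\sum_{k=1}^{m}\angle\lambda_k(X_0^*BX_0)$, and set $\tilde A:=X_0^*AX_0$ and $\tilde B:=X_0^*BX_0$. Since $A,B\in\mathcal{C}[\theta,\theta+\pi)$ and this sector is convex, $A+B$ is again sectorial in $[\theta,\theta+\pi)$, and each element of $\mathcal{W}(X_0^*MX_0)$ is a strictly positive multiple of an element of $\mathcal{W}(M)$; hence $\tilde A,\tilde B,\tilde A+\tilde B\in\mathcal{C}[\theta,\theta+\pi)$ and the angular numerical range can only shrink under compression, giving $[\underline{\phi}(\tilde A),\bar{\phi}(\tilde A)]\subseteq[\underline{\phi}(A),\bar{\phi}(A)]$ and similarly for $B$. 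Together with the hypothesis this yields $\underline{\phi}(\tilde A)\ge\underline{\phi}(A)\ge\bar{\phi}(B)\ge\bar{\phi}(\tilde B)$. Evaluating \eqref{eq:sum_p_matrix} for $A+B$ at $X_0$ gives $\psi_m(A+B)\ge\sum_{k=1}^m\angle\lambda_k(\tilde A+\tilde B)$, and since the compressions are $m\times m$ each such sum runs over the whole spectrum and equals $\angle\det(\cdot)$, the continuous sum of eigenvalue angles inside the sector. The lemma thus reduces to proving, for $m\times m$ sectorial $\tilde A,\tilde B\in\mathcal{C}[\theta,\theta+\pi)$ with $\underline{\phi}(\tilde A)\ge\bar{\phi}(\tilde B)$, the inequality
\[
\angle\det(\tilde A+\tilde B)\;\ge\;\angle\det\tilde B .
\]

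To prove this I would introduce $g(t):=\angle\det(\tilde B+t\tilde A)$ for $t\in[0,1]$, defined through the branch of $\Im\log\det$ that coincides with the sum of eigenvalue angles; this is a genuine continuous branch because $C(t):=\tilde B+t\tilde A$ remains sectorial, hence nonsingular, along the path. Then $g(0)=\angle\det\tilde B$, $g(1)=\angle\det(\tilde A+\tilde B)$, and by the Jacobi formula $g'(t)=\Im\tr\!\big(C(t)^{-1}\tilde A\big)$, so it suffices to show $g'\ge0$. Fixing $t$ and taking a sectorial decomposition $C=R^*D_CR$ with $D_C=\diag(e^{j\phi_1(C)},\dots,e^{j\phi_m(C)})$, I would rewrite $g'(t)=\Im\tr(D_C^*\hat A)=\sum_k\rho_k\sin(\alpha_k-\phi_k(C))$, where $\hat A:=R^{-*}\tilde AR^{-1}$ is congruent to $\tilde A$ and $(\hat A)_{kk}=\rho_ke^{j\alpha_k}$ with $\rho_k>0$ and $\alpha_k\in[\underline{\phi}(\tilde A),\bar{\phi}(\tilde A)]$.

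The decisive step is to sign each summand. From $\tilde B=C-t\tilde A$ the matrix $\hat B:=R^{-*}\tilde BR^{-1}=D_C-t\hat A$ is congruent to $\tilde B$, so reading off its diagonal gives the identity $e^{j\phi_k(C)}=t(\hat A)_{kk}+(\hat B)_{kk}$. Here $(\hat B)_{kk}\in\mathcal{W}(\hat B)$ has angle at most $\bar{\phi}(\tilde B)\le\underline{\phi}(\tilde A)\le\alpha_k$, while $(\hat A)_{kk}$ has angle exactly $\alpha_k$; because $\tilde A,\tilde B\in\mathcal{C}[\theta,\theta+\pi)$ forces the gap between these two angles to be strictly less than $\pi$, the vector $e^{j\phi_k(C)}$ is a nonnegative combination of two vectors lying in a common open half-plane, so its angle $\phi_k(C)$ lies between theirs. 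In particular $0\le\alpha_k-\phi_k(C)<\pi$, whence $\sin(\alpha_k-\phi_k(C))\ge0$ and $g'(t)\ge0$. Integrating over $[0,1]$ yields the displayed inequality and hence $\psi_m(A+B)\ge\psi_m(B)$.

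The hard part is precisely this pointwise control of $g'$: the crude estimate only confines $\alpha_k-\phi_k(C)$ to $(-\pi,\pi)$, where the sign of the sine is undetermined, and nonnegativity is recovered only by exploiting the diagonal identity $e^{j\phi_k(C)}=t(\hat A)_{kk}+(\hat B)_{kk}$ together with the width-less-than-$\pi$ property of the common sector. The remaining points I would make sure to justify are that the optimizer $X_0$ in \eqref{eq:sum_p_matrix} may be chosen of full column rank, so that the compressions are genuinely sectorial, and that the continuous branch of $\Im\log\det$ agrees with $\angle\det$ at both endpoints of the homotopy.
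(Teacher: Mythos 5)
Your proof is correct, but the core of it is genuinely different from the paper's. Both arguments begin identically: use \eqref{eq:sum_p_matrix} to pass to an $m\times m$ compression and reduce to the inequality $\sum_k\angle\lambda_k(\tilde A+\tilde B)\geq\sum_k\angle\lambda_k(\tilde B)$. From there the paper takes a short algebraic route: it chooses the special compression $\hat X$ with $\hat X^*B\hat X=\Lambda$ diagonal unitary, factors $\hat X^*A\hat X+\Lambda=(\hat X^*A\hat X\Lambda^{-1}+I)\Lambda$, invokes the eigenvalue-phase bound for products of sectorial matrices to conclude $\angle\lambda_k(\hat X^*A\hat X\Lambda^{-1})\in[0,\pi)$, and then notes that adding $I$ shifts each eigenvalue along the positive real axis, keeping its angle in $[0,\pi)$, so the correction term $\sum_k\angle\lambda_k(\hat X^*A\hat X\Lambda^{-1}+I)$ is nonnegative. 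You instead run a homotopy $g(t)=\Im\log\det(\tilde B+t\tilde A)$ and sign $g'(t)=\Im\tr\bigl(C(t)^{-1}\tilde A\bigr)$ via the sectorial decomposition of $C(t)$ and the diagonal identity $e^{j\phi_k(C)}=t(\hat A)_{kk}+(\hat B)_{kk}$. What the paper's route buys is brevity, at the price of importing the product-phase bound from the phase literature and relying on the special diagonal form of $\Lambda$; what your route buys is self-containedness (you need only congruence-invariance of phases, the fact that numerical-range elements --- hence diagonal entries --- of a sectorial matrix have angles in $[\underline{\phi},\bar{\phi}]$, and the planar geometry of adding two vectors in a common open half-plane), it works for an arbitrary full-rank maximizer $X_0$ rather than a specially constructed one, and it gives the slightly stronger conclusion that the determinant angle increases monotonically along the whole segment from $\tilde B$ to $\tilde B+\tilde A$. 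The two loose ends you flag are genuinely needed but routine: any $X$ for which the angles in \eqref{eq:sum_p_matrix} are defined is automatically of full column rank (otherwise $X^*BX$ is singular and has a zero eigenvalue), and the paper's explicit $\hat X$ from the sectorial decomposition of $B$ shows the maximum is attained; and the continuous branch of $\Im\log\det$ agrees with the sum of principal eigenvalue angles at both endpoints because the eigenvalues of $C(t)$ stay in the cone of angles $[\theta,\theta+\pi)$, which avoids the closed negative real axis, so that sum is itself continuous in $t$ and congruent to $g(t)$ modulo $2\pi$.
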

\begin{proof}
	Since $A,B\in\mathcal{C}[\theta,\theta+\pi)$, {we have that $A+B\in\mathcal{C}[\theta,\theta+\pi)$ \cite[p.~2]{zhang2015matrix}, \cite[Thm.~7.1]{WANG2020PhaseMath}}. It follows by the sectorial decomposition that there exists an $\hat{X}\in\mathbb{C}^{n\times m}$  such that $\hat{X}^*B\hat{X}=\text{diag}(e^{j\phi_1(B)},e^{j\phi_2(B)},\dots,e^{j\phi_m(B)})=:\Lambda$. Then by definition, it holds that
	\begin{multline}\label{eq:pf_sectorial_perturb}
	\psi_m(A+B)=\max_{X\in\mathbb{C}^{n\times m}}\sum_{k=1}^m\angle\lambda_k(X^*(A+B)X)\geq\sum_{k=1}^m\angle\lambda_k(\hat{X}^*A\hat{X}+\Lambda)\\
	= \psi_m(B)+\sum_{k=1}^m\angle\lambda_k(\hat{X}^*A\hat{X}\Lambda^{-1}+I).
	\end{multline}
	Clearly, both $\hat{X}^*A\hat{X}$ and $\Lambda^{-1}$ are sectorial, yielding that
	\begin{multline*}\pi>\bar{\phi}(A)-\underline{\phi}(B)\geq\bar{\phi}(\hat{X}^*A\hat{X})+\bar{\phi}(\Lambda^{-1})\geq \angle\lambda_k(\hat{X}^*A\hat{X}\Lambda^{-1})\geq \underline{\phi}(\hat{X}^*A\hat{X})+\underline{\phi}(\Lambda^{-1})\\
	\geq\underline{\phi}(A)- \bar{\phi}(B)\geq 0,\end{multline*}
	for all $k=1,2,\dots,m$. Since a shift of eigenvalues of $\hat{X}^*A\hat{X}\Lambda^{-1}$ along the positive real axis will keep their phases in $[0,\pi)$, it must hold that
	$$\sum_{k=1}^m\angle\lambda_k(\hat{X}^*A\hat{X}\Lambda^{-1}+I)\geq 0,~k=1,2,\dots,m.$$
	Plugging the inequality into (\ref{eq:pf_sectorial_perturb}) completes the proof.
\end{proof}

\begin{proof}[Proof of Theorem~\ref{thm:sol_PI}]
	The case when $r=0$ or $r=n$ is trivially true. Next we let $r\in[1,n-1]$. Applying the sectorial decomposition on $A$ yields that
	$$A=T^*DT,$$
	where $T$ is nonsingular, $D=\text{diag}(e^{j\phi_1(A)},e^{j\phi_2(A)},\dots,e^{j\phi_n(A)})$ and $\phi_k(A)\in[0,\pi)$, $k=1,2,\dots,n$. 
	%then construct that 
	Consider ${\hat{E}}=T^*\Lambda T\in\mathcal{S}_p(A,r)$ with $$\Lambda:=\text{diag}(e^{j\phi_1(A)/2},\dots,e^{j\phi_{r}(A)/2},1,\dots,1).$$ 
	It is straightforward to verify that
	$${\hat{E}}^{-1}A{\hat{E}}^{-1}=T^{-1}\text{diag}(1,\dots,1,e^{j\phi_{r+1}(A)},\dots,e^{j\phi_n(A)})T^{-*},$$
	{which} is positive-imaginary {since $A$ is positive-imaginary. This}
	%which
	implies that
	$$\Phi(\phi({\hat{E}}^{-1}A{\hat{E}}^{-1}))=\Phi(0,\dots,0,\phi_{r+1}(A),\dots,\phi_n(A)),$$
	{and hence that}
	%Therefore, it holds the following inequality
	\begin{multline*}%\label{eq:pf_SchMir_oneside}
	\min_E\{\Phi(\phi(E^{-1}AE^{-1}))|~E^{-1}AE^{-1}\in\mathcal{C}[0,\pi),~E~\text{is sectorial},~\prank (E)\leq r\}\\\leq \Phi(0,\dots,0,\phi_{r+1}(A),\dots,\phi_n(A))
	\end{multline*}
	since {${\hat{E}}$} is a feasible point to the minimization problem. Therefore,
	%Next,
	it suffices to show that for all feasible $E$, it holds that
	\begin{align}\label{eq:pf_PI2}
	\Phi(\phi(E^{-1}AE^{-1})\geq \Phi(0,\dots,0,\phi_{r+1}(A),\dots,\phi_n(A)).
	\end{align}
	Furthermore, by 
	\cite[Thm.~3.7]{stewart1990matrix}
	we obtain that showing \eqref{eq:pf_PI2} is equivalently to showing that
	%, we obtain that showing \eqref{eq:pf_PI2} is equivalently to showing that for each $m=1,2,\dots,n$, it holds that
	\begin{align}\label{eq:pf_PI3}
	\psi_m(E^{-1}AE^{-1})\geq \Phi_m(0,\dots,0,\phi_{r+1}(A),\dots,\phi_n(A))=\sum_{k=r+1}^{\min\{n,r+m\}}\phi_k(A),
	\end{align}
	for $m=1,2,\dots,n$.
	
	To this end, let $E\in\mathbb{C}^{n\times n}$ be an arbitrary sectorial matrix such that $E^{-1}AE^{-1}$ is positive-imaginary and $\prank(E) \leq r$. The sectorial decomposition yields that
	$$E=S^*LS,$$
	where $S$ is nonsingular, $L=\text{diag}(L_0,I)$, and $L_0=\text{diag}(e^{j\phi_1(E)},e^{j\phi_2(E)},\dots,e^{j\phi_r(E)})$. Let $W=TS^{-1}$, and partition $D$ and $W$ conformably with $L$ as
	$$D=\begin{bmatrix}
	D_1 & 0 \\
	0 & D_2 \\
	\end{bmatrix}~\text{and}~
	W=\begin{bmatrix}
	W_{11} & W_{12} \\
	W_{21} & W_{22} \\
	\end{bmatrix},
	$$
	respectively. For $k\in[1,n-r]$, we have
	\begin{equation}\label{eq:pf_SchMir_relax}
	\begin{aligned}
	\phi_k(E^{-1}AE^{-1})&=\phi_k(S^{-1}L^{-1}W^*DWL^{-1}S^{-*})=\phi_k(L^{-1}W^*DWL^{-1})\\
	&=\phi_k\left(\begin{bmatrix}
	\star & \star \\
	\star & W_{12}^*D_1W_{12}+W_{22}^*D_2W_{22} \\
	\end{bmatrix}
	\right)\\
	&\geq \phi_k(W_{12}^*D_1W_{12}+W_{22}^*D_2W_{22}),
	\end{aligned}
	\end{equation}
	where the inequality follows from the interlacing property of the compressed sectorial matrix; see \cite[Sec.~4]{WANG2020PhaseMath} for details.
	
	Since $W_{12}^*D_1W_{12}+W_{22}^*D_2W_{22}$ is a compression of the sectorial matrix $L^{-1}W^*DWL^{-1}$, it is sectorial as well. If either $W_{12}\in\mathbb{C}^{r\times (n-r)}$ or $W_{22}\in\mathbb{C}^{(n-r)\times (n-r)}$ is not full 
	%(column/row)
	rank, we can perturb them slightly to make them full rank. Since the phases of a sectorial matrix
	%$W_{12}^*D_1W_{12}+W_{22}^*D_2W_{22}$
	are continuous on its elements, the {variation of the phases of $W_{12}^*D_1W_{12}+W_{22}^*D_2W_{22}$} due to the perturbations can be made arbitrarily small.
	{Therefore}, without loss of generality, we may assume both $W_{12}$ and $W_{22}$ are full
	rank, and hence the square matrix $W_{22}^*D_2W_{22}$ is nonsingular.
	
	Now, we first consider the case when $r<n-r$. To this end, let
	$$\tilde{D}_1:=\begin{bmatrix}
	D_1 & \\ & e^{j\underline{\phi}(D_1)}I \\
	\end{bmatrix}\in\mathbb{C}^{(n-r)\times(n-r)}~\text{and}~\tilde{W}_{12}:=\begin{bmatrix}
	W_{12} \\ \Delta \\
	\end{bmatrix}\in\mathbb{C}^{(n-r)\times(n-r)},
	$$
	where $\Delta$ is such that $\tilde{W}_{12}$ is nonsingular; such an extension is always possible since $W_{12}$ is nonsingular. Moreover, clearly $\tilde{W}_{12}^*\tilde{D}_1\tilde{W}_{12}$ is nonsingular. Now, by the fact that $W_{12}^*D_1W_{12}+W_{22}^*D_2W_{22}$ is sectorial, we know that
	$$\tilde{W}_{12}^*\tilde{D}_1\tilde{W}_{12}+W_{22}^*D_2W_{22}=W_{12}^*D_1W_{12}+W_{22}^*D_2W_{22}+e^{j\underline{\phi}(D_1)}\Delta^*\Delta$$
	is sectorial as long as $\|\Delta\|$ is sufficiently small. As a result, it holds that
	\begin{align*}%\label{eq:pf_SchMir_smallsum}
	\psi_{m}(W_{12}^*D_1W_{12}+W_{22}^*D_2W_{22})\geq \psi_{m}(\tilde{W}_{12}^*\tilde{D}_1\tilde{W}_{12}+W_{22}^*D_2W_{22})-\epsilon_{\Delta},
	\end{align*}
	where $\epsilon_{\Delta}>0$ can be chosen to be arbitrarily small by bounding $\Delta$ properly.
	{Next, consider the case $r\geq n-r$, in which case $W_{12}$ is a square matrix ($r = n-r$) or tall matrix ($r > n-r$). In any case, letting $\tilde{W}_{12}=W_{12}$ and $\tilde{D}_1=D_1$,} the matrix $\tilde{W}_{12}^*\tilde{D}_1\tilde{W}_{12}$ is nonsingular as well. As a result, for all $r\in[0,n]$, both $\tilde{W}_{12}^*\tilde{D}_1\tilde{W}_{12}$ and $W_{22}^*D_2W_{22}$ are sectorial with phases in $[\underline{\phi}(A),\bar{\phi}(A)]\subset[\theta,\theta+\pi)$.
	
	Finally, to show that \eqref{eq:pf_PI3} holds for $m = 1, \ldots, n$, we again split it into two cases. First, for any $m\in[1,n-r]$, noting that
	$$\underline{\phi}(\tilde{W}_{12}^*\tilde{D}_1\tilde{W}_{12})\geq\underline{\phi}(\tilde{D}_1)=\underline{\phi}({D}_1)=\phi_r(A)\geq \phi_{r+1}(A)=\bar{\phi}(D_2)=\bar{\phi}(W_{22}^*D_2W_{22})$$
	and by Lemma~\ref{lem:sectorial_perturb}, we have
	\begin{multline*}
	\psi_m(E^{-1}AE^{-1}) \geq \psi_{m}(W_{12}^*D_1W_{12}+W_{22}^*D_2W_{22})\geq \psi_{m}(\tilde{W}_{12}^*\tilde{D}_1\tilde{W}_{12}+W_{22}^*D_2W_{22})-\epsilon_{\Delta}\\
	\geq \psi_{m}(W_{22}^*D_2W_{22})-\epsilon_{\Delta} = \sum_{k=r+1}^{r+m}\phi_{k}(A)-\epsilon_{\Delta}.
	\end{multline*}
	Since $\epsilon_{\Delta}$ is independent of $\phi_{k}(A)$, $k=1,2,\dots,n$, and can be made arbitrarily small, we have
	\begin{align}\label{eq:pf_SchMir_smallsum}
	\psi_m(E^{-1}AE^{-1}) \geq  \sum_{k=r+1}^{r+m}\phi_{k}(A).
	\end{align}
	{Next, for all $m\in(n-r,n]$, noting that $E^{-1}AE^{-1}$ is positive-imaginary and hence has nonnegative phases, we obtain that $\psi_m(E^{-1}AE^{-1}) \geq \psi_{n-r}(E^{-1}AE^{-1})$. By using (\ref{eq:pf_SchMir_smallsum}), we therefore have}
	\begin{align}\label{eq:pf_Sch_Mir_F}
	\psi_m(E^{-1}AE^{-1}) \geq \psi_{n-r}(E^{-1}AE^{-1}) \geq \sum_{k=r+1}^{n}\phi_{k}(A).
	\end{align}
	On the other hand, it is straightforward to verify that
	\begin{align}\label{eq:pf_PI4} \Phi_m(0,\dots,0,\phi_{r+1}(A),\dots,\phi_n(A))=\sum_{k=r+1}^{\min\{n,r+m\}}\phi_k(A).
	\end{align}
	The combination of (\ref{eq:pf_SchMir_smallsum}), (\ref{eq:pf_Sch_Mir_F}) and (\ref{eq:pf_PI4}) shows \eqref{eq:pf_PI3}, which completes the proof.
\end{proof}

\section{Proof of Theorem~\ref{thm:geo_dist_approx} }\label{app:proof_bf_geo_dist_approx}
For the proof of Theorem~\ref{thm:geo_dist_approx} we need the concept of \emph{majorization}. To this end, for $a \in \mR^n$ by $a^{\downarrow}$ we denote the vector obtained by sorting $a$ in a nonincreasing order, i.e., $a^{\downarrow} = [a_k^\downarrow]_{k=1}^n = [a_{\alpha(k)}]_{k=1}^n$ where $\alpha$ is a permutation on $\{ 1,\ldots, n \}$ such that $a_1^\downarrow \geq a_2^\downarrow \geq \ldots \geq a_n^{\downarrow}$. For two vectors $a,b \in \mR^n$, we write $a \prec_w b$ if $\sum_{k = 1}^{\ell} a_k^\downarrow \leq \sum_{k = 1}^{\ell} b_k^\downarrow$ for $\ell = 1, \ldots, n$; in this case we say that $a$ is weakly submajorized by $b$ \cite[p.~12]{marshall2011inequalities}. In fact, $\prec_w$ is a preordering on $\mR^n$, and on the equivalence classes defined by identifying ``$a = b$'' if and only if $a^{\downarrow} = b^{\downarrow}$ (i.e., if and only if $a \prec_w b$ and $b \prec_w a$), it is a partial ordering  \cite[p.~19]{marshall2011inequalities}.

With $\varphi(\cdot)$ we denote the angle of the eigenvalues of a matrix, i.e., $\varphi(\cdot) := \angle \lambda(\cdot)$. In particular, note that for a unitary sectorial matrix $U$ we have that $\varphi(U) = \phi(U)$.

\begin{proof}[Proof of Theorem~\ref{thm:geo_dist_approx}]
If $A$ has phase rank $\prank(A) \leq r$, then the infimum is $0$ and obtained at $\hat{A}_r = A$. Without loss of generality, in the remaining we will therefore assume that $\prank(A)>r$.
To this end, let $A = PUP$ and $A_r = P_rU_rP_r$ be the corresponding symmetric polar decompositions, and note that the phases of $A$ and $A_r$ depend only on the phases of $U$ and $U_r$, respectively.
Using the explicit form of the geodesic distance \eqref{eq:geodesic},
problem \eqref{eq:phase_rank_approx} can be rewritten as
\begin{subequations}\label{eq:phase_rank_approx_PandU}
\begin{align}
\inf_{\substack{P_r \in \mathcal{P}_n \\ U_r \in \mathcal{U}_n^a}} & \quad  \sqrt{\|\log(P^{-1}P_r^2P^{-1})\|_{\Phi_1}^{2}+\|\log(U^*U_r)\|_{\Phi_2}^2} \label{eq:phase_rank_approx_PandU_cost} \\
\text{subject to} & \quad \prank(U_r) \leq r,
\end{align}
\end{subequations}
which separates in the two variables $P_r$ and $U_r$. 
Moreover, the global minimum over $P_r$ is attained for $\hat{P}_r = P$, which means that we are left with the problem to minimize $\| \log(U^{-1}U_r) \|_{\Phi_2}$ over all $U_r \in \mathcal{U}_n^a$ such that $\prank(U_r) \leq r$.
Using the same arguments as in the proof of \cite[Thm.~4.4]{ringh2020finsler},
we therefore have that an equivalent problem to \eqref{eq:phase_rank_approx} is
\begin{subequations}\label{eq:phase_rank_approx_U}
\begin{align}
\inf_{U_r \in \mathcal{U}_n^a} & \quad \Phi_{2}( |\varphi(U^{-1}U_r )|) \\
\text{subject to} & \quad \prank(U_r) \leq r.
\end{align}
\end{subequations}
To show that the optimal solution to \eqref{eq:phase_rank_approx_U} is $\hat{U}_r$ for all symmetric gauge functions $\Phi_2$, we {equivalently show} that $|\varphi(U^{-1} \hat{U}_r )| \prec_w  |\varphi(U^{-1} U_r )|$ for all $U_r \in \mathcal{U}_n^a$ with $\prank(U_r) \leq r$
\cite[Thm.~4]{fan1951maximum}; see also, e.g., \cite[Thm.~1]{mirsky1960symmetricgaugefunction}, \cite[Sec.~3.5]{horn1994topics}, \cite[Prop.~4.B.6]{marshall2011inequalities}.
%To show that $|\phi(U^{-1} \hat{U}_r )| \prec_w  |\phi(U^{-1} U_r )|$ for all $U_r \in \mathcal{U}_n^a$ with $\prank(U_r) \leq r$,
To prove the latter, we note that by \cite[Thm.~2]{chau2011metrics} we have that $ | |\varphi(U^{-1})|^{\downarrow} - |\varphi(U_r)|^\downarrow | \prec_w |\varphi(U^{-1}U_r)|^{\downarrow}$. 
Moreover, it is easily verified that $| |\varphi(U^{-1})|^{\downarrow} - |\varphi(\hat{U}_r)|^{\downarrow} |  =|\varphi(U^{-1}\hat{U}_r)|^{\downarrow}$. Therefore, if we can show that 
$\hat{U}_r$ is a minimum of
\begin{align*}
\min_{\prec_w} & \quad  | |\varphi(U)|^{\downarrow} - |\varphi(U_r)|^\downarrow | \\
\text{subject to} & \quad U_r \in \mathcal{U}_n^a, \quad \prank(U_r) \leq r,
\end{align*}
where $\min_{\prec_w}$ is minimizing with respect to the preordering $\prec_w$, then for all $U_r \in \mathcal{U}_n^a$ with $\prank(U_r) \leq r$ we have that
\[
|\varphi(U^{-1}\hat{U}_r)|^{\downarrow} = | |\varphi(U^{-1})|^{\downarrow} - |\varphi(\hat{U}_r)|^{\downarrow} |  \prec_w | |\varphi(U)|^{\downarrow} - |\varphi(U_r)|^\downarrow | \prec_w |\varphi(U^{-1}U_r)|^{\downarrow}.
\]
Moreover, {by the} transitivity of preorders, we obtain that $|\varphi(U^{-1}\hat{U}_r)| \prec_w |\varphi(U^{-1}U_r)|$, which is the desired inequality that would prove the theorem.
To this end, let 
\[
\script{D}_{r} := \{ x \in \mR^n \mid x_1 \geq x_2 \geq \ldots \geq x_n, \;  0 \leq x_k < \pi/2, \text{ and at most $r$ of the $x_k$s are nonzero}  \},
\]
and note that for all $x \in \script{D}_r$ there is at least one $U_r \in \mathcal{U}_n^a$ with $\prank(U_r) \leq r$ such that $|\phi(U_r)|^{\downarrow} = |\varphi(U_r)|^{\downarrow} = x$. We can therefore equivalently consider $\min_{\prec_w}| |\varphi(U^{-1})|^{\downarrow} - x|$ subject to $x \in \script{D}_r$. For the latter problem, a minimizer is given by making the first $r$ components of $|\varphi(U^{-1})|^{\downarrow} - x$ equal to zero, i.e., by taking $x_k = |\tilde{\phi}_k(U)|$ for $k = 1, \ldots r$ and $x_k =0$ for $k = r+1, \ldots, n$. This means that $x = |\phi(\hat{U}_r)|^{\downarrow}$ is a minimizer, and hence proves the theorem. 
\end{proof}

%\section*{References}
\bibliography{prank}

\end{document}